\def\authorsaddresses#1{\dedicatory{#1}}
\def\authorsaddresses#1{\dedicatory{#1}}
\numberwithin{equation}{section}
\newcommand{\A}{\mathcal{A}}
\newtheorem{theorem}{Theorem}[section]
\newtheorem{lemma}[theorem]{Lemma}
\newtheorem{corollary}[theorem]{Corollary}
\theoremstyle{definition}
\newtheorem{definition}[theorem]{Definition}
\newtheorem{example}[theorem]{Example}
\theoremstyle{remark}
\numberwithin{equation}{section}
\begin{document}
\setcounter{page}{1}



\title[Homology for one-dimensional solenoids]{Homology for one-dimensional solenoids}

\author[Massoud Amini, Ian F. Putnam and Sarah Saeidi Gholikandi]{ Massoud Amini$^1$, Ian F. Putnam$^2$
and  Sarah Saeidi Gholikandi$^1$}

\authorsaddresses{$^1$ Department of  Mathematics, University
of Tarbiat Modares, P. O. Box 14115-111, Tehran, Iran.\\
Sarahsaeadi@gmail.com, mamini@modares.ac.ir\\
\vspace{0.5cm} $^2$ Department of Mathematics and Statistics\\
University of Victoria,
Victoria, B.C., Canada V8W 3R4\\
ifputnam@uvic.ca}
\subjclass[2010]{Primary 55N35; Secondary 37D99, 37B10.}

\keywords{Smale spaces, one-dimensional generalized  solenoids, homology.}

\begin{abstract}
Smale space is a particular class of
 hyperbolic topological dynamical systems, defined
  by David Ruelle. The third author constructed a
  homology theory for Smale spaces
which is based on Krieger's dimension group invariant
for shifts of finite type. In this paper, we compute
this homology for the one-dimensional generalized
solenoids of R.F. Williams.

\end{abstract}
\maketitle

\section*{Introduction} Smale spaces were  defined by
David Ruelle as a purely topological version of the
basic sets of Axiom A systems which arise in Smale's
program for differentiable dynamics \cite{D1,S1,AH,F1,F2}.
Informally, a pair $(X,\varphi)$, where $X$ is
a compact metric space and $\varphi$ a homeomorphism
of $X$, is a Smale space if it possesses local
coordinates in contracting and expanding directions.
 The precise definition  will be given in Definition
  \ref{1-2}.  Hyperbolic toral automorphisms,
  one-dimensional generalized solenoids as
  described by R.F. Williams
   and shifts of finite type are all examples
   of Smale spaces. In fact, shifts of
finite type play a particularly important role
in the subject. In particular, Rufus Bowen \cite{B1}
proved that
every irreducible Smale space is the image of a shift of
finite type under a finite-to-one factor map. In
the late 1970's, W. Krieger  \cite{K1} introduced
a pair of invariants for  a shift of
 finite type, called the past
and future  dimension groups.
Building on these two ideas, the third author \cite{P1}
has shown
the existence of a homology theory for Smale spaces
whose existence had been conjectured earlier by
Bowen \cite{B2} and which generalizes Krieger's invariant.

The main goal of this paper is to compute the
homology of one-dimensional generalized solenoids
defined  by Williams \cite{W1,W2},  generalized by Inhyeop Yi
\cite{Y} and later by Klaus   Thomsen  \cite{T1}.
 In fact  these spaces are inverse limits of
  finite graphs with one expanding map $f$. This is a
natural first class to consider for the homology
 beyond shifts of finite type, where the theory simply
 re-captures Krieger's invariants. First of all, the
 shifts of finite type which cover the solenoids are
 particularly simple and can be written quite explicitly.
 Secondly, since the solenoids have totally disconnected
 stable sets, the factor map can be chosen to have
  the property that
 it is $s$-bijective (see section 1). In this case, the
 computation of the homology is simplified significantly.

The paper is organized as follows. In  the first section,
 we briefly review Smale spaces and shifts of finite type.
The second section summarizes results on
one-dimensional solenoids. Much of this is a summary
of the work of Yi and Thomsen, but we also establish
some new results which will be needed  in our
computations later. In the final section, we begin by stating
our main results. The
remainder of that section is occupied with the calculation
of the homology.

\section{Smale spaces}

\begin{definition}
\label{1-2} Suppose that $(X,\varphi)$  is a compact metric space
and $\varphi$ is a homeomorphism of  $X$. Then $(X,\varphi)$
is called a Smale space if  there exist
constants  $\varepsilon_{X}$ and $0 < \lambda < 1$ and a
continuous map from $$\triangle_{\varepsilon_{X}}=\{(x,y) \in X\times X \quad| \quad d(x,y) \leq \varepsilon_{X}\quad \}$$
to  $X$ (denoted with $[,]$) such that:\\

$\emph{B} \ 1   \quad  [x,x]=x, $

$\emph{B}  \ 2  \quad  [x,[y,z]]=[x,z],$

$\emph{B}  \  3    \quad    [x,y],z]=[x,z] ,$

$\emph{B}  \ 4  \quad    [\varphi(x),\varphi(y)] =[x,y] $,

$\emph{C} \ 1   \quad    d(\varphi(x),\varphi(y))\leq \lambda  \, d(x,y), \text{ whenever } [x,y]=y, $

$\emph{C}  \ 2   \quad    d({\varphi}^{-1}(x),{\varphi}^{-1}(y) \leq \lambda \, d(x,y), \text{ whenever } [x,y]=x, $
whenever both sides of an equation are defined.
\end{definition}

Examples of Smale spaces include  solenoids,
substitution tiling spaces, the basic sets
for Smale's Axiom A systems and shifts of finite type.

Let $(X,\varphi)$ be a Smale space. For any $x$ in $X$ and  $0<\varepsilon \leq \varepsilon_{X}$, we define
$$X^{s}(x,\varepsilon)=\{ y \ | \quad d(x,y)\leq\varepsilon \ ,[x,y]=y\}$$
$$X^{u}(x,\varepsilon)=\{ y \  | \quad d(x,y)\leq\varepsilon \ ,[x,y]=x\}$$
These sets are called local stable and local unstable sets.

 We say that two points $x$ and $y$ in $X$ are stably (or unstably) equivalent if
$$ \lim_{n\rightarrow +\infty}d(\varphi^{n}(x),\varphi^{n}(y))=0  \hspace{1cm}
(or \lim_{n\rightarrow -\infty}d(\varphi^{n}(x),\varphi^{n}(y))=0, \text{resp.}).$$
Let $X^{s}(x)$  and  $X^{u}(x)$  denote the stable
and unstable equivalence classes  of $x$, respectively.
As the notation would suggest, there is a close connection
between local stable sets and  stable equivalence classes
(see Chapter 2 of \cite{P1}).

We recall that a factor map between two Smale
spaces $(Y, \psi)$ and $(X, \varphi)$ is
a continuous function $\pi:Y \rightarrow X$ such that $\pi \circ \psi = \varphi \circ \pi$. Of particular importance
in the homology theory are factor maps which are
$s$-bijective: that is, for each $y$ in $Y$, the restriction
of $\pi$ to $Y^{s}(y)$ is a bijection to $X^{s}(\pi(y))$.
There is  obviously an analogous definition of a
$u$-bijective factor map, which will not be needed here.

If $(X, \varphi)$ is a Smale space, then so
is $(X, \varphi^{n})$, for every positive integer
$n$. In fact, these two Smale spaces
have exactly the same stable and unstable equivalence
relations. Somewhat more subtlely, they have
naturally isomorphic homology theories in the
sense of \cite{P1}. If one takes the view that the
homology theories produce a sequence of abelian groups
together with canonical automorphisms
induced by $\varphi$ (see Chapter 3 of \cite{P1}), then,
while groups are the same, the automorphism of the
latter is simply the  $n$th power of that
of the former. As our attention will be mainly
in computing the groups themselves, we will be
quite happy to replace $\varphi$ by $\varphi^{n}$.

\subsection{Shifts of finite type}

 A graph $G$ consists of finite sets $G^{0}$ and $G^{1}$ and maps  $i,t:G^{1}\rightarrow G^ {0}$.
The elements of $G^{0}$ are called vertices and the elements of $G^{1}$ are called edges. The notation for the maps is meant to suggest initial and terminal and the graph is drawn by depicting each vertex as a dot and each edge e as an arrow from $i(e)$ to
$t(e)$.
To any graph $G$, we associate the following
dynamical system:
$$\Sigma_{G}=\{ \ {(e_n)}_{n \in \Bbb Z} \  |\ e_{n} \in G^{1},  \  t(e_{n})=i(e_{n+1}) \  for \  all \   n \in \Bbb Z \},$$
$${(\sigma(e))}_{n}={e}_{n+1}.$$
For any $e$ in $\Sigma_{G}$ and $K \leq L$, we let
$e_{[K,L]} = (e_{K}, e_{K+1}, \ldots, e_{L})$. It is also
convenient to define $e_{[K+1,K]} = t(e_{K})=i(e_{K+1})$.
We use the metric
$$
d(e,f) = \inf\{ 1, 2^{-K-1} \mid K \geq 0, e_{[1-K,K]}
= f_{[1-K,K]} \}
$$ on $\Sigma_{G}$.

It is then easy to see that $(\Sigma_{G}, \sigma)$
is a Smale space with constants $\varepsilon_{X}=\lambda=\frac{1}{2}$ and
\begin{eqnarray*}
{[e,f]}_{ k}=\left  \{ \begin{array}{ll}
f_{k} & \  k\leq 0 \\ e_{k} &  \ k \geq 1. \end {array} \right.
\end{eqnarray*}

While the precise definition of a shift of finite type
is slightly different (see \cite{L1}), every shift of
finite type is conjugate to $(\Sigma_{G}, \sigma)$, for some graph $G$.

 Let $G$ be a graph and let $K \geq 2$. A path of length $K$
in $G$ is a sequence $(e_1,e_2,...,e_K)$ where $e_k$ is in $G^1$, for each $1 \leq k \leq K$
and $t(e_k) = i(e_{k+1})$, for $1 \leq k < K$ .
 We  let $G^K$ denote the set of all paths of length $K$
 in $G$ and, simultaneously, the graph whose vertex set is $G^{K-1}$
and whose edge set is $G^K$ with initial and terminal maps $$i(e_1,e_2,...,e_K)=(e_1,e_2,...,e_{K-1}), \quad t(e_1,e_2,...,e_K)=(e_2,e_3,...,e_{K}).$$

We recall the computation of the invariants
$D^{s}(\Sigma_{G}, \sigma)$ and $D^{u}(\Sigma_{G}, \sigma)$. We let $\mathbb{Z} G^{K}$ denote the
free abelian group of the set $G^{K}$,
for any $K \geq 0$. If $A$ is some subset
of $G^{K}$, we let $Sum(A) = \Sigma_{a \in A} a$.
  The initial
and terminal maps $i, t: G^{K} \rightarrow G^{K-1}$
induce group homomorphisms, also denoted $i,t$
from $\mathbb{Z} G^{K}$ to $\mathbb{Z} G^{K-1}$.
In addition, we have a group homomorphism
$t^{*}: \mathbb{Z} G^{K-1} \rightarrow \mathbb{Z} G^{K}$ defined by
$t^{*}(e) = Sum(t^{-1}\{ e \})$. We define the map
$\gamma_{G}^{s} = i \circ t^{*}  $ and
$D^{s}(G^{K})$ is defined to be
the inductive limit of the sequence
\[
\mathbb{Z} G^{K-1} \stackrel{\gamma_{G}^{s}}{\rightarrow}
\mathbb{Z} G^{K-1} \stackrel{\gamma_{G}^{s}}{\rightarrow}
\cdots
\]
As explained in \cite{P1}, the results for
different values of $K$ are all naturally
isomorphic. In fact, the map $i$ induces and
isomorphism from $D^{s}(G^{K})$ to $D^{s}(G^{K-1})$.
These groups are all isomorphic to
$D^{s}(\Sigma_{G}, \sigma)$.
There are analogous definitions of $i^{*}$,
$\gamma_{G}^{u} = t \circ i^{*}$ and
$D^{u}(G^{K})$.

\section{ One-dimensional generalized  solenoids }

The main  goal of this paper is to compute  the homology of one-dimensional  generalized  solenoids, defined by K. Thomsen, based on an earlier work of  Yi \cite{T1,Y}.  The solenoids were defined first by Williams on manifolds, and then generalized  by Yi on topological spaces \cite{W1,W2,Y}, but also
see \cite{W3}. We first quickly review the Thomsen's definition \cite{T1}.

\begin{definition}
\label{2-1}
Let $X$ be a finite (unoriented), connected
 graph with vertices $V$ and edges $E$.
 Consider a continuous map $f : X \rightarrow X $.
  We say that   $(X, f)$ is   a
pre-solenoid if the
following conditions are satisfied for
some metric $d$ giving the topology of $X$:\\
$\alpha$) (expansion) there are
constants $C > 0$ and $\lambda > 1$ such that
\newline
$d(f^n(x), f^n(y)) \geq  C \lambda ^nd(x, y)$
for every $n \in  \mathbb{N}$ when
 $x, y \in  e \in  E$ and there is
 an edge $e'\in  E $ with $f^n([x, y]) \in e'$
($[x, y]$ is the interval in $e$ between
$x$ and $y$),\\
$\beta$) (non-folding) $f^n$ is locally
injective on $e$ for each $e \in E$ and
each $n \in \mathbb{N}$,\\
$\gamma$) (Markov) $f(V) \subset V$,\\
$\delta$) (mixing) for every edge $e\in E$,
 there is  $m \in  \mathbb{N} $ such that
 $X \subseteq f^m(e)$,\\
$\epsilon$) (flattening) there is
 $d \in \mathbb{N}$ such that for
 all $x \in X$ there is a neighbourhood
 $U_x$ of $x$ with $f^d(U_x)$ homeomorphic
 to $(- 1, 1)$.
\end{definition}

We usually refer to a $d$ satisfying the flattening
condition as the \emph{flattening number} of $f$.

Suppose that $(X, f)$ is a pre-solenoid. Define $$\overline{X} =\{(x_i)_{i=0}^\infty  \in  X^{\Bbb N \cup \{0\}} : f (x_{i+1}) = x_i , i = 0, 1, 2, \cdots\}$$
Then  $\overline{X}$ is  a compact metric space with the metric  $$D ((x_i)_{i=0}^\infty,(y_i)_{i=0}^\infty)=\sum^{\infty}_{i=0} 2^{-i}d (x_i, y_i).$$
We also define $\overline{f} : \overline{X} \rightarrow \overline{X}$ by  $\overline{f}(x)_i = f (x_i)$
for all $i \in \Bbb N \cup \{0\}$. It
 is a homeomorphism with inverse
$$\overline{f}^{-1}(x_0, x_1, x_2, \cdots ) = (x_1, x_2, x_3, \cdots ).$$
Finally, we define the map $\pi: \overline{X} \rightarrow X$ by
\[
\pi(x_{0}, x_{1}, x_{2}, \ldots ) = x_{0},
(x_{0}, x_{1}, x_{2}, \ldots ) \in \overline{X}.
\]

Following Williams and Yi \cite{W2,Y},  Thomsen called $(\overline{X}, \overline{f})$   a generalized one-dimensional
solenoid or just a one-solenoid \cite{T1}.

\begin{definition}
\label{2-2}
Let $(X, f)$ be a pre-solenoid. The
system $(\overline{X}, \overline{f})$
is called a generalized one-solenoid.
\end{definition}

\begin{theorem}
\label{2-3}
\cite{T2} One-dimensional generalized solenoids are Smale spaces.\end{theorem}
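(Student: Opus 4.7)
The plan is to verify Smale space axioms B1--B4 and C1--C2 of Definition \ref{1-2} directly for $(\overline{X},\overline{f})$, by constructing the bracket map from the inverse-limit description of $\overline{X}$ and reading off the contraction estimates from the expansion $\alpha$ together with the exponential weights in the metric $D$. Before anything else I would fix $\varepsilon_{\overline{X}}>0$ small enough (using $\epsilon$ to control the local geometry of $X$) so that whenever $D(x,y)\le\varepsilon_{\overline{X}}$ every coordinate pair $(x_i,y_i)$ lies in an arc on which $f$ is locally injective by the non-folding condition $\beta$; this makes continuous local branches of $f^{-1}$ available throughout.

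For such $x=(x_i)$ and $y=(y_i)$, define $z=[x,y]$ recursively by $z_0=x_0$ and $z_i=f_i(z_{i-1})$ for $i\ge 1$, where $f_i$ denotes the local branch of $f^{-1}$ sending a neighbourhood of $y_{i-1}$ onto a neighbourhood of $y_i$. The Markov condition $\gamma$ ensures $(z_i)\in\overline{X}$, and the map $(x,y)\mapsto [x,y]$ is continuous. The algebraic axioms then follow from the recursion: B1 because $[x,x]$ selects $x_i$ at each step; B2 and B3 because the bracket records the $0$-th coordinate of its first argument and the branch data of its second, both of which are preserved under iterating the bracket; and B4 from the explicit formula $\overline{f}(x)=(f(x_0),x_0,x_1,\ldots)$, which shifts the branch data consistently.

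For the contraction axioms, the crucial observation is that $[x,y]=y$ forces $y_0=x_0$, and a direct computation with $D$ gives
\[
D(\overline{f}(x),\overline{f}(y)) = d(f(x_0),f(y_0))+\tfrac{1}{2}D(x,y) = \tfrac{1}{2}D(x,y),
\]
yielding C1 with contraction constant $\tfrac{1}{2}$. The equation $[x,y]=x$ forces $x_i$ and $y_i$ to share a local branch for every $i$, so the expansion inequality $d(x_{i-1},y_{i-1})\ge C\lambda\,d(x_i,y_i)$ iterates to $d(x_i,y_i)\le(C\lambda)^{-i}d(x_0,y_0)$; summing this against the weights in $D(\overline{f}^{-1}(x),\overline{f}^{-1}(y))=\sum_{i\ge 0}2^{-i}d(x_{i+1},y_{i+1})$ then yields a geometric contraction provided $C\lambda^n>3/2$. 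The main technical obstacle is precisely that the raw constants from $\alpha$ may fail this threshold and that the branch selection in the construction of the bracket must be made continuous across vertices of $X$; both difficulties are resolved by replacing $\overline{f}$ with a sufficiently high iterate $\overline{f}^n$ and invoking the flattening condition $\epsilon$, which by the remarks following Definition \ref{1-2} leaves the Smale space structure essentially unchanged and does no harm to the subsequent homology computations.
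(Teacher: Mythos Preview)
The paper does not prove Theorem~\ref{2-3}; it is simply quoted from Thomsen \cite{T2}, so there is no in-paper argument to compare against. Your sketch is in the right spirit---define the bracket by pushing $x_0$ backward along the local inverse branches determined by $y$, then read off C1 from the shift formula $\overline{f}(x)=(f(x_0),x_0,x_1,\dots)$ and C2 from the expansion hypothesis---and this is essentially how the result is proved in the literature.

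That said, two technical points in your write-up would not survive a careful reading. First, the one-step inequality $d(x_{i-1},y_{i-1})\ge C\lambda\,d(x_i,y_i)$ does \emph{not} iterate to $d(x_i,y_i)\le (C\lambda)^{-i}d(x_0,y_0)$ in any useful way when $C<\lambda^{-1}$, since then $C\lambda<1$; you should instead invoke the $i$-step expansion bound directly to get $d(x_i,y_i)\le C^{-1}\lambda^{-i}d(x_0,y_0)$, and you must also track the side condition in axiom~$\alpha$ that $f^i$ of the arc between $x_i$ and $y_i$ remain in a single edge---this is exactly where the flattening axiom and a careful choice of $\varepsilon_{\overline{X}}$ are needed, not merely for continuity of branch selection. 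Second, your closing move of replacing $\overline{f}$ by $\overline{f}^n$ does not, by itself, prove that $(\overline{X},\overline{f})$ is a Smale space: the remark after Definition~\ref{1-2} goes in the opposite direction (from $\varphi$ to $\varphi^n$). You would need either to verify C2 for $\overline{f}$ directly (which the estimate $d(x_i,y_i)\le C^{-1}\lambda^{-i}d(x_0,y_0)$ does give, once the edge condition is handled, for suitable weights in $D$), or to invoke the standard fact that a homeomorphism some power of which is a Smale space map is itself one after an equivalent change of metric. Finally, the Markov condition $\gamma$ plays no role in showing $(z_i)\in\overline{X}$; that follows immediately from $z_i$ being a local $f$-preimage of $z_{i-1}$.
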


It was shown by Williams that expanding attractors of certain diffeomorphisms
of compact manifolds are one-solenoids via a conjugacy which turns the restriction of the
diffeomorphism into $f$. He also showed that each one-solenoid arises in this way from a diffeomorphism of the 4-sphere \cite{T1}.

 Considering a pre-solenoid $(X, f)$,
 an orientation of $X$  is  defined  to be a collection of homeomorphisms $\psi_e: (0, 1) \rightarrow e$, $e \in E$. We say that $f$ is positively
(respectively,
negatively) oriented with respect to the orientation $\psi_e
, e \in  E$, when the function $$\psi_{e'}^{-1} o f o \psi_e : \psi_e^{-1}(e \cap f^{-1}(e')) \rightarrow  [0, 1] $$
is increasing (respectively,  decreasing) for each $e, e' \in  E$. A pre-solenoid $(X, f)$ is positively
(respectively, negatively) oriented when there is an orientation of the edges in $X$ such
that $f$ is positively  (respectively, negatively) oriented with respect to that orientation.
$(X, f)$ is oriented when it is either positively or negatively oriented. Notice that if
$(X, f)$ is oriented, then
$(X, f^{2})$ is positively oriented.
We say that $(X, f)$ is orientable if $X$ has
an orientation making $(X,f)$ oriented.

The 1-solenoid $(\overline{X}, \overline{f})$ is orientable when there is an oriented pre-solenoid $(X_1, f_1)$ such that $(\overline{X}, \overline{f})$
is conjugate to $(\overline{X_1}, \overline{f_1})$. When $(X_1, f_1)$ can be chosen to be positively (resp, negatively) oriented, we say that $(\overline{X}, \overline{f})$ is positively (resp,  negatively) orientable\cite{T1}.

\begin{theorem}
\label{2-4} \cite{T1} Let $(X, f)$ be a pre-solenoid. Then $(\overline{X}, \overline{f})$ is positively (resp, negatively) orientable  if and only if $(X,f)$ is positively (resp, negatively) oriented.  \end{theorem}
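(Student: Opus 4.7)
The forward implication is immediate from the definitions: if $(X,f)$ itself is positively (resp.\ negatively) oriented, then taking $(X_1, f_1) = (X, f)$ with the identity conjugacy shows that $(\overline{X}, \overline{f})$ is positively (resp.\ negatively) orientable. So the content is in the reverse implication. Suppose then that $(\overline{X}, \overline{f})$ is positively orientable; fix a conjugacy $h : (\overline{X}, \overline{f}) \to (\overline{X_1}, \overline{f_1})$ and a positive orientation $\{\psi^{(1)}_{e'}\}_{e' \in E_1}$ of $X_1$. Since the projections $\pi$ and $\pi_1$ are not invertible, one cannot simply push $h$ down to a map $X \to X_1$; the plan is instead to transport the orientation across $h$ at the level of the one-dimensional unstable leaves of the Smale spaces $\overline{X}$ and $\overline{X_1}$, and then descend to edges via $\pi$.

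I would proceed in four steps. \emph{Step 1:} Orient the unstable leaves of $\overline{X_1}$. Whenever $\tilde{y} \in \overline{X_1}$ projects into the interior of an edge $e' \in E_1$, the map $\pi_1$ restricted to a sufficiently small local unstable set $\overline{X_1}^{u}(\tilde{y}, \varepsilon)$ is a homeomorphism onto an open sub-interval of $e'$, and pulling back through $\psi^{(1)}_{e'}$ orients this local set; these local orientations patch together on each full unstable leaf. \emph{Step 2:} Transport these orientations across $h$. Because $h$ is a Smale space conjugacy, it carries each unstable leaf of $\overline{X}$ homeomorphically onto an unstable leaf of $\overline{X_1}$, so pulling back yields an orientation of every unstable leaf of $\overline{X}$. \emph{Step 3:} For each edge $e \in E$ of $X$, define $\psi_e : (0,1) \to e$ by choosing a point $x$ in the interior of $e$, a lift $\tilde{x} \in \pi^{-1}(x)$, and transporting the unstable-leaf orientation at $\tilde{x}$ down through $\pi$ to an open interval in $e$; then extend to all of $e$ by path-connectedness. \emph{Step 4:} Check that the resulting orientation of $X$ makes $f$ positively oriented. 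This is forced by the identities $h \circ \overline{f} = \overline{f_1} \circ h$, $\pi \circ \overline{f} = f \circ \pi$, and $\pi_1 \circ \overline{f_1} = f_1 \circ \pi_1$, together with the positivity of $(X_1, f_1)$. The negatively oriented case is completely parallel.

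The main obstacle is establishing coherence in Step 3. Different lifts $\tilde{x}$ of a single $x \in e^{\circ}$ lie on different unstable leaves, and one must show they yield the same local orientation of $e$ near $x$; this requires that the orientation obtained in Step 2 varies continuously across the Cantor set $\pi^{-1}(x)$, which in turn relies on the continuity of the bracket $[\cdot,\cdot]$ on $\overline{X}$ and on $h$ being a global homeomorphism rather than merely a leaf-by-leaf isomorphism. One also has to patch local orientations along $e$; here the path-connectedness of the interior of $e$ and the continuity of $\pi$ and $h$ allow a standard extension argument, with the flattening and non-folding conditions of Definition \ref{2-1} preventing any orientation reversal as one moves along the edge.
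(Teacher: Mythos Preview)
Your approach is quite different from the paper's. The paper does not prove Theorem~\ref{2-4} directly at this point; it cites Thomsen~\cite{T1} and only returns to the statement after the full homology computation, deriving it from Corollary~\ref{3-3}: the presentation $(X,f)$ is orientable if and only if $H^s_0(\overline{X},\overline{f})$ (equivalently $H^u_1(\overline{X},\overline{f})$) is torsion-free, and since these homology groups are conjugacy invariants of the solenoid, orientability of $(X,f)$ is determined by $(\overline{X},\overline{f})$ alone. Your argument, by contrast, is a direct geometric transport of orientations through unstable leaves and the conjugacy $h$---essentially the strategy one would expect in~\cite{T1}. Your route is self-contained and handles the positive/negative distinction explicitly, whereas the paper's torsion argument, taken literally, only separates orientable from non-orientable and moreover requires the entire machinery of Sections~2 and~3 (including the reduction to a single-vertex presentation and the computations of Theorems~\ref{3-1} and~\ref{3-2}).

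That said, your Step~3 has a genuine gap. You reduce coherence to the claim that the transported orientation ``varies continuously across the Cantor set $\pi^{-1}(x)$,'' citing continuity of the bracket and of $h$. But a $\{\pm 1\}$-valued continuous function on a totally disconnected set is merely locally constant, not constant, so this alone does not force different lifts to induce the same orientation on $e$. What is actually needed is that each fiber $\pi^{-1}(x)$ lies in a single stable equivalence class (which it does), together with the fact that $\overline{f}$ preserves the orientation built in Step~2---and this last point uses precisely that $(X_1,f_1)$ is \emph{positively} oriented, so that $\overline{f_1}$, and hence $\overline{f}=h^{-1}\circ\overline{f_1}\circ h$, acts on unstable leaves by orientation-preserving maps. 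Given two lifts $\tilde{x},\tilde{x}'\in\pi^{-1}(x)$, one then iterates $\overline{f}$ until their images lie in a common \emph{local} stable set, invokes bracket holonomy there, and pulls the conclusion back using $\overline{f}$-invariance. With this amendment your outline goes through; as written, the justification for Step~3 does not close.
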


We will give a new proof of this fact which is based
on our computations of homology.

We observe that if $(X, f)$ is
 a pre-solenoid. Then for $ n  \in \Bbb N$,  $(X, f^n)$ is also a  pre-solenoid.
Moreover, if
 $d$ is a flattening number, then $(X, f^{n})$,
 $n \geq d $ is  a pre-solenoid whose flatting
 number is one.
 Also observe that
 $(\overline{X}, \overline{f^{n}})$ is the same as
 $(\overline{X}, \overline{f}^{n})$.

\begin{theorem}
\label{2-5} \cite[$\S 5$]{W2} Let $(\overline{X}, \overline{f})$  be a 1-solenoid. Then there is an integer $n$ and
pre-solenoid $(X', f')$ such that $(\overline{X}, \overline{f^n})$ is conjugate to
$(\overline{X'}, \overline{f'})$ and $X'$
has a single vertex That is, $X'$ is a wedge of cricles.
\end{theorem}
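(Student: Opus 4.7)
The strategy is to follow Williams' original argument in \cite{W2}: normalize by passing to a power of $f$ so that the vertex dynamics are simplified, then reduce the number of vertices through a sequence of local modifications that preserve the associated $1$-solenoid up to conjugacy. First, since $V$ is finite and $f(V) \subseteq V$, the sequence $f^k(V)$ stabilizes at an $f$-invariant subset $V_0 \subseteq V$ on which $f$ acts as a permutation; replacing $f$ by a suitable power $f^n$ ensures that $f$ fixes every vertex of $V_0$ pointwise. Enlarging $n$ to a multiple of the flattening number $d$, I may also assume $f$ has flattening number $1$, so that $f$ carries small neighborhoods of every point in $X$ to topological arcs. (Transient vertices in $V \setminus V_0$ can be handled by an analogous argument, exploiting mixing together with expansion.)

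Second, I would carry out the vertex reduction. Suppose $|V| \geq 2$, and pick $v \in V$. A small neighborhood $U_v$ is a wedge of arcs — one for each edge incident to $v$ — while $f(U_v)$ is a single arc by flattening; thus although $v$ is a branch point of $X$, the local image under $f$ has no branching at $v = f(v)$. Using this ``smoothing'' of branching, one constructs a new pre-solenoid $(X_1, f_1)$ in which $v$ is removed from the vertex set, by pairing the incident arcs at $v$ according to the local action of $f$ — an arc whose $f$-image emanates leftward from $v$ is paired with one whose image emanates rightward — and joining each pair into a single edge passing through $v$, now regarded as an interior point of the new edge. The modification is invisible at the level of the inverse limit: there is a canonical conjugacy $\overline{X} \cong \overline{X_1}$ as Smale spaces intertwining $\overline{f}$ and $\overline{f_1}$. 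Iterating finitely many times yields a pre-solenoid $(X', f')$ with a single vertex, so that $X'$ is a wedge of circles. One then verifies that conditions $\alpha$ through $\epsilon$ of Definition \ref{2-1} persist at each step; the key ones, flattening and non-folding, hold because the local modification at $v$ is designed to match the $f$-image structure.

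The main obstacle is in the second step: defining the local modification at a vertex consistently with the dynamics. A naive approach such as collapsing all vertices of $X$ to a single point fails, as it can identify distinct fixed-point orbits of $\overline{f^n}$ in $\overline{X}$ and break the flattening condition in $X'$. The correct pairing of incident arcs at $v$ is dictated by the local action of $f$ — specifically, by which arcs map to which side of the flattened image $f(U_v)$ — and verifying that this pairing is well-defined (in particular, that every incident arc admits a suitable partner) and that the resulting object is genuinely a pre-solenoid whose 1-solenoid is conjugate to the original is the technical heart of Williams' argument.
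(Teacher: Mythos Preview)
The paper does not prove this theorem; it is simply quoted from Williams \cite[\S 5]{W2}, so there is no in-paper argument to compare against.

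Your overall plan---pass to a power so that $f$ fixes each vertex and has flattening number one, then iteratively simplify the vertex set using the flattening---is the right shape and matches Williams in spirit. But the specific local modification you describe has a gap. At a fixed vertex $v$ of degree $k$, flattening forces the $k$ half-edge germs at $v$ to map under $f$ into exactly two of the half-edges at $v$ (the two sides of the arc $f(U_v)$), so the incident arcs split into ``left'' and ``right'' classes. You propose to pair each left arc with a right arc and fuse the pair into a single edge through $v$. Nothing, however, forces the two classes to have the same size: at a vertex of degree $3$ the split is necessarily $2$--$1$, and no bijective pairing exists. You correctly flag well-definedness of the pairing as the crux, and this is precisely where the argument as written fails.

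Williams' construction in \cite{W2,W3} does not proceed by removing vertices via a bijective matching of branches. The local move is closer to a quotient---identifying all branches on the same side to a single germ---combined with a global shift-equivalence argument, and the details are somewhat involved. Since the paper under review does not reproduce them, you would need to consult \cite{W2} directly to fill this in.
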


The pre-solenoid $(X', f')$ is usually called an
elementary presentation for the solenoid
 $(\overline{X}, \overline{f^n})$. We will
 usually denote the single vertex by $p$.

We begin our analysis of a pre-solenoid, $(X, f)$,
 having a single
vertex by observing that $f^{-1}\{ p \}$ is a finite
subset of $X$ and removing these points then
divides the edges of $X$ into a finite collection
of edges.

\begin{lemma} \cite{Y}
\label{2-6}
 Suppose that
$(X,f)$ is a pre-solenoid with a single
vertex $p$. Let $E=\{e_1,...e_m\}$ be
the edge set of $X$ with a given orientation.
 For each edge $e_i \in E$, we can give
 $e_i - f^{-1}\{ p \}$ the partition
  $\{e_{i,j}\}, 1\leq j\leq j(i)$,
satisfying the following
 \begin{enumerate}
\item
 the initial point of  $e_{i,1}$ is the
 initial point  of $e_i$.
\item
the terminal point of $e_{i,j}$ is the
initial point of $e_{i,j+1}$ for  $1\leq j < j(i)$,
\item
the terminal point of $e_{i,j(i)}$
is the terminal point of  $e_i$, and
\item
there is (with a small abuse of notation)
$1 \leq f(i,j) \leq n$ such that
$f\mid_{e_{i,j}}$ maps $e_{i,j}$ homeomorphically to
$e_{f(i,j)}$. We also set $s(i,j)$ to be $\pm 1$ according to whether $f\mid_{e_{i,j}}$ preserves or
reverses orientation.
\end{enumerate}
\end{lemma}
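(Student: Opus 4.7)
The plan is to cut each edge $e_i$ at its preimages of the vertex $p$ and show that $f$ acts as a homeomorphism on each resulting piece.

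First I would verify that $f^{-1}\{p\}$ is finite. The Markov hypothesis $\gamma$ forces $f(p)=p$ since $p$ is the only vertex, so $p\in f^{-1}\{p\}$. If $f^{-1}\{p\}$ were infinite, compactness of $X$ would yield a limit point $x$, and continuity of $f$ gives $f(x)=p$; but then every neighbourhood of $x$ inside an edge through $x$ contains other preimages of $p$, violating the non-folding axiom $\beta$ (local injectivity of $f$ on each edge). Consequently $f^{-1}\{p\}\cap e_i$ is finite for each $i$. Listing its points in the interior of $e_i$ in the order of the chosen orientation as $q_{i,1},\dots,q_{i,j(i)-1}$, setting $q_{i,0}=q_{i,j(i)}=p$, and taking $e_{i,j}$ to be the subedge from $q_{i,j-1}$ to $q_{i,j}$, conclusions (1), (2) and (3) hold immediately.

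For conclusion (4), the construction ensures that the open interior of $e_{i,j}$ contains no preimage of $p$, so $f$ sends this interior into $X\setminus\{p\}$, which is the disjoint union of the open edges. By connectedness the image lies in a single open edge, which we denote $e_{f(i,j)}$. In the parametrizations $\psi_{e_i}$ and $\psi_{e_{f(i,j)}}$, the restriction of $f$ becomes a continuous, locally injective (by $\beta$) map between open subintervals of $(0,1)$, hence strictly monotone. Continuity at the two endpoints forces the monotone map to approach $\{0,1\}$ at each end, and since it cannot pass through $p$ in the interior, the two endpoints must go to opposite ends of $(0,1)$; thus it is a homeomorphism onto $(0,1)$. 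This extends continuously to a homeomorphism $f|_{e_{i,j}}\colon e_{i,j}\to e_{f(i,j)}$, and we set $s(i,j)\in\{\pm 1\}$ to be $+1$ or $-1$ according to whether this homeomorphism is increasing or decreasing in the chosen parametrizations.

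The principal technical point is the monotonicity argument showing that $f|_{e_{i,j}}$ sweeps out the entire target edge exactly once; the remaining steps are direct consequences of continuity, compactness, and the pre-solenoid axioms $\beta$ and $\gamma$. Notably the expansion axiom $\alpha$ and the flattening axiom $\epsilon$ are not needed for this combinatorial statement.
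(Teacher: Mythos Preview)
Your argument is correct. Note, however, that the paper does not actually supply a proof of this lemma: it is stated with an attribution to Yi \cite{Y} and is used only as a structural fact to set up the wrapping-rule notation. So there is no ``paper's own proof'' to compare against; your write-up constitutes an independent verification.

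A few remarks on the details. Your finiteness argument for $f^{-1}\{p\}$ via local injectivity is fine; one should observe (and you implicitly do) that by pigeonhole an accumulation of preimages must occur inside a single closed edge, where axiom $\beta$ applies. The key step, that the strictly monotone map on the open subinterval must have endpoint limits $0$ and $1$ (in some order), is justified exactly as you say: continuity of $f$ on the closed edge sends both endpoints of $e_{i,j}$ to $p$, hence the parametrized limits lie in $\{0,1\}$, and strict monotonicity rules out both limits coinciding. This gives surjectivity onto the open target edge, and the extension to the closed edge is then automatic.

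Your closing observation that only axioms $\beta$ and $\gamma$ are used is accurate and worth keeping; the expansion and flattening hypotheses enter only later in the paper, when one studies the factor map $\overline{\rho}$ and the sets $E_a,E_b,E_0$.
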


There is a convenient notation to describe
our pre-solenoids. We let $E^{*}$ denote
the set of words on the set $E$ and their inverses.
If we are given a function
$\widetilde{f}:E \rightarrow E^*$
we regard this as a wrapping rule as follows.
Fix $1 \leq i \leq m$. If we
 $$\widetilde{f}(e_i) = e^{s(i,1)}_{f(i,1)}e^{s(i,2)}_{f(i,2)}...e^{s(i,j(i))}_{f(i,j(i))}$$
then the interval $e_{i}$ is divided into $j(i)$
consecutive subintervals, $e_{i,j}$, and the map
$f$ carries $e_{i,j}$ homemorphically to $e_{f(i,j)}$,
either preserving or reversing the orientation
according to $s(i,j)$.

 \begin{example}
 \label{2-7} Let $X$ be a wedge of two clockwise circles $a , b$ with a unique vertex $p$ and  $f, g, k , h $  given by the wrapping rules: $\widetilde{f}:a\rightarrow aab , b \rightarrow abb$, $\widetilde{g}:  a\rightarrow  a^{-1}a^{-1} b^{-1}, b\rightarrow  a^{-1}b^{-1} b^{-1}$,  $\widetilde{k}:a\rightarrow  b^{-1}aa, b \rightarrow a^{-1}bb $  and $\widetilde{h}: a\rightarrow  a^{-1}ba, b\rightarrow  b^{-1}ab$. Then  $(X,f) $ and $ (X,k)$   are positively oriented  pre-solenoids,  $(X,g)$ is a negatively oriented  pre-solenoid and   $ (X,h)$ is not an oriented pre-solenoid. Figures 1 and 2 show these  pre-solenoids. Note that $(X,k)$ is also a  pre-solenoid  given in Figure 3 in which the orientation of edge $b$ has been  reversed.
\end{example}

\begin{figure}[h]
         \centering
         \begin{subfigure}[b]{0.4\textwidth}
                 \centering
                 \includegraphics[width=\textwidth]{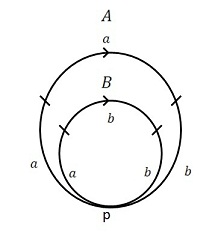}
                 \caption{Pre-solenoid (X,f)}
         \end{subfigure}%
         ~ 
         \begin{subfigure}[b]{0.4\textwidth}
                 \centering
                 \includegraphics[width=\textwidth]{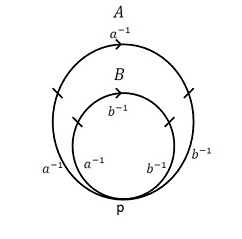}
                 \caption{Pre-solenoid (X,g)}
                 \label{fig:tiger}
         \end{subfigure}
         ~ 

         \caption{}\label{fig:animals}
 \end{figure}

 \begin{figure}[h]
         \centering
         \begin{subfigure}[b]{0.4\textwidth}
                 \centering
                 \includegraphics[width=\textwidth]{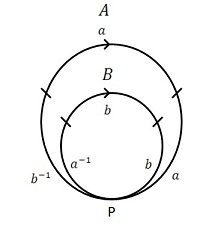}
                 \caption{Pre-solenoid (X,k)}
                 \label{fig:gull}
         \end{subfigure}%
         ~ 
         \begin{subfigure}[b]{0.4\textwidth}
                 \centering
                 \includegraphics[width=\textwidth]{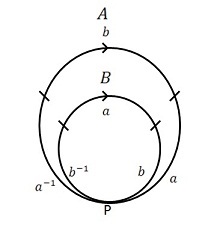}
                 \caption{Pre-solenoid (X,h)}
                 \label{fig:tiger}
         \end{subfigure}
         ~ 

        \caption{}\label{}
 \end{figure}

As a consequence of the expanding condition,
and by replacing $(X,f)$ by $(X, f^{n})$, for
some $n \geq 1$ if necessary, we may assume that
$j(i) \geq 3$, for all $i$.

We next let $U_{p}$ be a neighbourhood of $p$,
sufficiently small so that $f(U_{p})$ is
contained
in $ \cup_{i}  (e_{i,1} \cup e_{i, j(i)})$,
which is a neighbourhood of $p$, also.
From the fact that the flattening number is one,
$f(U_{p})$ is contained in the union of exactly
two of these intervals (and no fewer).
First suppose, these two are contained in
$e_{a}$ and $e_{b}$,
with $ a \neq b$, then by simply reversing
the orientations on these intervals as
needed, we may assume that the two intervals
 are $e_{a,1}$ and $e_{b, j(b)}$. The other case to
 consider is when the two are both in the same
 $e_{a}$, but then they must be $e_{a,1}$ and
 $e_{a, j(a)}$. In any case, we have $a$ and $b$
 such that
 $f(U_{p}) \subset e_{a,1} \cup e_{b,j(b)}$,
 allowing the possibility that $a = b$.

It follows for every $i$ that the two
sets  $f(e_{i,1} \cap U_{p})$ and
 $f( e_{i,j(i)} \cap U_{p})$
are  contained in one of $e_{a,1}$ or $e_{b,j(b)}$.
It also follows from the non-flattening condition
that $f(e_{a,1} \cap U_{p})$ and
$f( e_{b,j(b)} \cap U_{p})$ cannot be
contained in the same one. Replacing $f$
 by $f^{2}$ if necessary, we can assume that
 $f(e_{a,1} \cap U_{p}) ) \subset e_{a,1}$
 and  $f( e_{b,j(b)} \cap U_{p}) \subset e_{b,j(b)}$.

Notice that if $f(e_{i,j} \cap U_{p}) \subset e_{a,1}$, then we
have $f(e_{i,j}) = e_{a}$ and an analogous result holds for
$e_{b, j(b)}$.  In particular, we have $f(e_{a,1}) = e_{a}$
and $f(e_{b, j(b)}) = e_{b}$.

For each $i, 1 \leq j < j(i)$, we let $x_{i,j}$ denote
the unique terminal point of
$e_{i,j}$ which is also the initial point
of  $e_{i,j+1}$. Notice that the set of all such
$x_{i,j}$ is exactly $f^{-1}\{ p \} - \{ p \}$. Let $U_{i,j}$
be a neighbourhood of this point such that
$f(U_{i,j}) \subset U_{p}$.
Notice then that $f^{2}( e_{i,j} \cap U_{i,j})$ and
$f^{2}(e_{i,j} \cap U_{i,j+1})$ are each contained
in $e_{a,1}$ or in $e_{b, j(b)}$. Moreover, due to
the flattening condition, they cannot both be contained
in the same one.

Consider the following sets:
\begin{eqnarray*}
E_{a} & = & \{ e_{i} \in E \mid f(e_{i,1} \cap U_{p}),
f(e_{i,j(i)} \cap U_{p}) \subset e_{a,1} \} \\
E_{b} & = & \{ e_{i} \in E \mid f(e_{i,1} \cap U_{p}),
f(e_{i,j(i)} \cap U_{p}) \subset e_{b,j(b)} \} \\
E_{0}&  =  &  E - E_{a} - E_{b} \\
\end{eqnarray*}

\begin{theorem}
\label{2-8}
Let $(X, f)$ be a pre-solenoid
with $e_{a,1}, e_{b,j(b)}$ as above.
The sets $E_{a}$ and $E_{b}$ are disjoint.
Moreover,  $(X,f)$ is orientable if and only if
the set
$E_{a} \cup E_{b}$
is empty.
\end{theorem}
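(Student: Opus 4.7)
The plan is to prove the disjointness first and then each direction of the orientability equivalence by bookkeeping how the sign $s(i,j)$ of the wrapping rule translates to a direction at $p$ under two applications of $f$. Throughout, I write $\epsilon_i^0, \epsilon_i^1 \in \{a, b\}$ for the flattening type at the initial and terminal endpoints of $e_i$, so that $E_a = \{e_i : \epsilon_i^0 = \epsilon_i^1 = a\}$ and $E_b = \{e_i : \epsilon_i^0 = \epsilon_i^1 = b\}$. Disjointness is then immediate: the sub-intervals $e_{a,1}$ and $e_{b,j(b)}$ are distinct (if $a = b$ this uses $j(a) \geq 3$; if $a \neq b$ they lie in different edges), and each of the two endpoint-neighborhoods of $e_i$ maps under $f$ into exactly one of them, so no edge can lie in both $E_a$ and $E_b$.

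For the forward direction of orientability, suppose $(X,f)$ is positively oriented so that $s(i,j) = +1$ for all $i, j$. Then $f|_{e_{i,1}}$ is orientation-preserving, so a neighborhood of the initial point $p$ of $e_{i,1}$ is mapped into the initial sub-edge $e_{f(i,1),1}$. The flattening at $p$ forces this image to lie in $e_{a,1} \cup e_{b,j(b)}$, and since $j(b) \geq 3$ precludes $e_{b,j(b)}$ from having the form $e_{k,1}$, one must have $f(i,1) = a$ and hence $\epsilon_i^0 = a$. A symmetric argument at the terminal endpoint yields $\epsilon_i^1 = b$, so every edge lies in $E_0$. The negatively oriented case is handled by swapping the roles of $a$ and $b$.

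For the converse, assume $E_a \cup E_b = \emptyset$. For each $e_i$ choose the orientation making $\epsilon_i^0 = a$ and $\epsilon_i^1 = b$, which is possible precisely because $e_i \in E_0$. The boundary cases $s(i,1) = +1$ and $s(i, j(i)) = +1$ are then immediate: $f$ sends the $p$-end of $e_{i,1}$ into $e_{a,1}$, matching the initial sides of both $e_{i,1}$ and $e_a$, and symmetrically at the terminal end. For internal sub-edges I proceed by induction on $j$ using the flattening at $x_{i,j}$. A direct computation shows that when $s(i,j) = +1$, the map $f^2$ sends the $e_{i,j}$-side of $x_{i,j}$ into $e_{b,j(b)}$ (via $e_{f(i,j), j(f(i,j))}$ combined with $\epsilon_{f(i,j)}^1 = b$), and into $e_{a,1}$ when $s(i,j) = -1$; the parallel statement for $e_{i,j+1}$ gives $f^2$-type $a$ precisely when $s(i,j+1) = +1$. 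The flattening at $x_{i,j}$ forces the two sides to land in different components of $e_{a,1} \cup e_{b,j(b)}$, yielding $s(i,j) = s(i,j+1)$. Induction from the boundary case then gives $s(i,j) = +1$ for every $j$, making $(X,f)$ positively oriented.

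The main obstacle is setting up the correspondence between the sign $s(i,j)$ and the $f^2$-direction at each endpoint of $e_{i,j}$. Once that dictionary is in place, the flattening condition at each internal vertex $x_{i,j}$ becomes precisely the propagation rule $s(i,j) = s(i,j+1)$, and the rest of the argument is a routine finite induction along the sub-edges of $e_i$.
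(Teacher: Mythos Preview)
Your proof is correct and follows essentially the same strategy as the paper's: both fix orientations so that each $e_i$ has its initial end mapping toward $e_{a,1}$ and terminal end toward $e_{b,j(b)}$, and then propagate orientation-preservation along the sub-edges by invoking the non-folding/flattening condition at each internal junction $x_{i,j}$. The only cosmetic difference is that you package the induction step as the symmetric identity $s(i,j)=s(i,j+1)$ (anchored at both endpoints), whereas the paper phrases it as a forward induction tracking directly where $f$ sends neighbourhoods; the underlying computation is identical.
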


\begin{proof} The first statement is clear
since $j(b) \geq 3$ means that
$e_{a,1} \neq e_{b,j(b)}$.
First suppose that $E_{a} \cup E_{b}$ is
empty. The edges $e_{a}$ and $e_{b}$ have already been
given orientations when we define $e_{a,1}$ and
$e_{b, j(b)}$ (and these are consistent when $a=b$).
Consider any $e_{i}$ in $E$ and we assume it has been
given some orientation (so that $e_{i,1}$ and
$e_{i, j(i)}$ are defined).
By hypothesis,
$E_{a}  \cup E_{b}$ is empty and this means that
$e_{i}$ is in $E_{0}$. If we consider the two sets
$f(e_{i,1} \cap U_{p}), f(e_{i,j(i)} \cap U_{p})$,
one is contained in $e_{a,1}$ and the other in $e_{b,j(b)}$.
By reversing the orientation of $e_{i}$ if necessary, we
may assume that  $f(e_{i,1} \cap U_{p}) \subset e_{a,1}$,
while $f(e_{i,j(i)} \cap U_{p}) \subset e_{b,j(b)}$.

We claim that the pre-solenoid is now positively oriented.
We will show that each $e_{i,j}, 1 \leq j < j(i)$
is mapped in an orientation preserving way to
$f(e_{i,j})$. We proceed by induction on $j$.
The case of $j=1$ is true simply by our choice of
orientation on $e_{i}$;  $e_{i,1} \cap U_{p}$
is mapped into $e_{a,1}$, which is the initial segment
of $e_{a} = f(e_{i,1})$. Assume the statement is
 true for some given $i,j$ and let $e_{i'} = f(e_{i,j})$.
  Let $V$ be a neighbourhood of
 the boundary point between $e_{i,j}$ and $e_{i,j+1}$
 sufficiently small so that $f(V) \subset U_{p}$.
Then  $e_{i,j} \cap V$ is being mapped by $f$
into $  e_{i',j(i')} \cap U_{p}$, by induction
hypothesis. By the choice of orientation on $e_{i'}$,
we know that $f(e_{i', 1)} \cap U_{p})$ is contained in $e_{a,1}$
and as $e_{i'}$ is in $E_{0}$, $f(e_{i', j(i'))} \cap U_{p})$
is contained in $e_{b,j(b)}$. By the non-folding hypothesis,
$f^{2}(e_{i,j+1} \cap V)$ cannot be contained in $e_{b,j(b)}$.
It follows then that it is contained in $e_{a,1}$.
This then implies, letting $f(e_{i,j+1})= e_{i''}$, that
$f(e_{i,j+1} \cap V)$ cannot be contained in
$e_{i'',j(i'')}$. The conclusion of the induction
statement follows.

Conversely suppose that the 1-solenoid  $(X,f)$
  is oriented. It follows immediately that for all $i$,
  $f(e_{i,1} \cap U_{p}) \subset e_{a,1}$ while
  $f(e_{i,j(i)} \cap U_{p}) \subset e_{b,j(b)}$. But this means
  that $e_{i}$ is in $E_{0}$.
 \end{proof}

Following Yi \cite{Y}, for a pre-solenoid $(X, f)$, we
 define a graph
$G_{X} $ (or simply $G$ if no confusion will arise)
by setting $G_{X}^{0} = E = \{ e_{i} \mid 1 \leq i \leq n \}$
 and
 $G_{X}^{1} = \mathcal{A} = \{ e_{i,j} \mid 1 \leq i \leq m, 1 \leq j \leq j(i) \}$.
The initial and terminal maps are given  by
$i(e_{i,j}) = e_{i}, t(e_{i,j}) = f(e_{i,j})= e_{f(i,j)}$,
for each suitable $i$ and $j$.
For simplicity, we let $(\Sigma_{X}, \sigma)$ denote
the associated shift of finite type.

The shifts of finite type $(\Sigma_X,\sigma_X)$  associated to  all of the solenoids defined in the above example  are  represented  by the same   graph $G$  given  by   part (B) of Figure 3 and $$D^s(\Bbb ZG^0)=D^u(\Bbb ZG^0)=\{ (i,i+j) : \ i\in \Bbb Z[1/3], j \in \Bbb Z \}$$
 by
\[\left\{ {\begin{array}{*{20}{c}}
{{D^s}({\Bbb ZG^0}) \to \{ (i,i + j):i \in \Bbb Z[1/3],j \in \Bbb Z  \} ,{\rm{  }}}\\
{[{\upsilon _1} +  {\upsilon _2},k] \to ({3^{ - (k-1)}},{3^{ - (k-1)}}),{\rm{ }}}\\
{[{\upsilon _1} - {\upsilon _2},l] \to (-1,1).}
\end{array}} \right.\]\\
\begin{figure}
         \centering
         \begin{subfigure}[b]{0.4\textwidth}
                 \centering
                 \includegraphics[width=\textwidth]{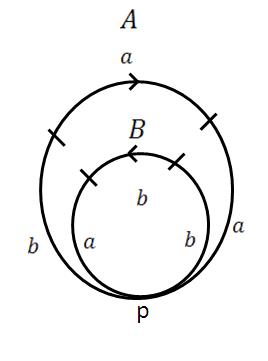}
                 \caption{Pre-solenoid (X,k) with\\ the new orientation}
                 \label{fig:gull}
         \end{subfigure}%
         ~ 
         \begin{subfigure}[b]{0.6\textwidth}
                 \centering
                 \includegraphics[width=\textwidth]{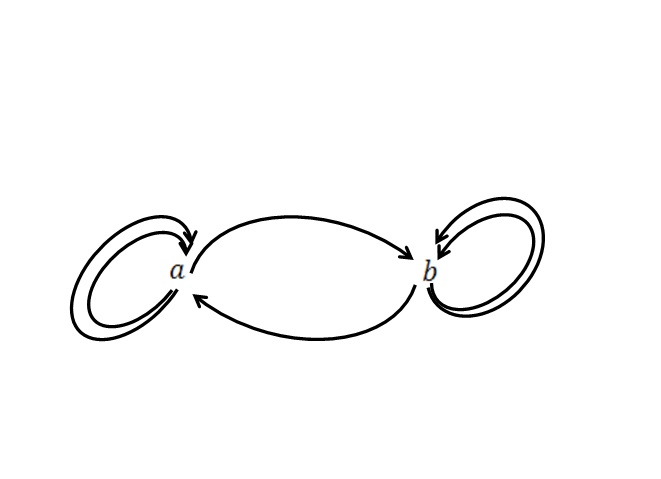}
                 \begin{flushright}
                      \quad \caption{Graph G presenting the  shift of finite
                                            type \\   \quad \ \ \  correspond to the solenoids defined in example \ref{2-7}}
                 \end{flushright}
                 \label{fig:tiger}
         \end{subfigure}
         ~ 

       \caption{}\label{fig:animals}
 \end{figure}

The sets $E_{a}$ and $E_{b}$
 we defined above provide us with specific
elements of the dimension groups $D^{s}(\Sigma_{X}, \sigma)$ and  $D^{u}(\Sigma_{X}, \sigma)$ associated with
$(\Sigma_{X}, \sigma)$.

\begin{lemma}
\label{2-9}
Let $(X, f)$ be a pre-solenoid as above.
Define $w$ in $\mathbb{Z} G^{0}$ by
$w = Sum(E_{a} ) -   Sum(E_{b} )$.
Also, let $w^{*}: \mathbb{Z} G^{0} \rightarrow \mathbb{Z}$
be the group homomorphism
 which sends each element of $E_{a}$ to $1$, each
 element of $E_{b}$ to $-1$ and the elements of
$E_{0}$ to zero. We have the following:
 \begin{enumerate}
 \item $w=0$ if and only if
 $(X, f)$ is orientable,
 \item $ \gamma_{G}^{s}(w) = w$,
\item $\mathbb{Z} G^{0}/ < w>$ is torsion free, where
$<w>$ denotes the cyclic subgroup generated by $w$,
\item $w^{*}=0$ if and only if $(X,f)$ is orientable,
\item $w^{*} \circ \gamma_{G}^{u} = w^{*}$.
\end{enumerate}
\end{lemma}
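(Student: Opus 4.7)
The plan is to dispatch (1), (3), (4) by elementary arguments and to reduce (2) and (5) to a single combinatorial identity, which I will prove by a telescoping argument along the image path $f(e_{j})$.

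Parts (1) and (4) are immediate from Theorem \ref{2-8}: since $E_{a}$ and $E_{b}$ are disjoint subsets of the $\mathbb{Z}$-basis $E = G^{0}$ of $\mathbb{Z}G^{0}$, both $w = \text{Sum}(E_{a}) - \text{Sum}(E_{b})$ and the homomorphism $w^{*}$ vanish if and only if $E_{a} \cup E_{b} = \emptyset$, equivalently, by Theorem \ref{2-8}, if and only if $(X,f)$ is orientable. For (3), if $w = 0$ the quotient is trivially free; otherwise $w$ has all coefficients in $\{-1, 0, 1\}$, so for any $e_{0} \in E_{a} \cup E_{b}$ the set $\{w\} \cup (E \setminus \{e_{0}\})$ is again a $\mathbb{Z}$-basis of $\mathbb{Z}G^{0}$, and the quotient is free abelian of rank $|E| - 1$, hence torsion free.

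For (2) and (5), unpacking the definitions $\gamma_{G}^{s} = i \circ t^{*}$ and $\gamma_{G}^{u} = t \circ i^{*}$ shows that both the coefficient of $e_{j}$ in $\gamma_{G}^{s}(w)$ and the value $w^{*}(\gamma_{G}^{u}(e_{j}))$ equal
\[
N(e_{j}) := \#\{k : f(e_{j,k}) \in E_{a}\} - \#\{k : f(e_{j,k}) \in E_{b}\},
\]
so both identities reduce to the single claim $N(e_{j}) = w^{*}(e_{j})$ for every $e_{j} \in E$. To attack this, I attach to each prong of $X$ at $p$ a label $\chi \in \{+1, -1\}$, assigning $+1$ if $f$ sends that prong into $e_{a,1}$ and $-1$ if into $e_{b,j(b)}$. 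Writing $\ell^{-}(e_{i}), \ell^{+}(e_{i})$ for the labels of the initial and terminal prongs of $e_{i}$, membership in $E_{a}, E_{b}, E_{0}$ is recorded respectively by $(\ell^{-}(e_{i}), \ell^{+}(e_{i})) = (+1,+1), (-1,-1)$, or the two being opposite, so in every case $w^{*}(e_{i}) = \frac{1}{2}\bigl(\ell^{-}(e_{i}) + \ell^{+}(e_{i})\bigr)$.

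Each subinterval $e_{j,k}$ contributes two prongs at $p$ to the image path $f(e_{j})$: the outgoing prong $Q_{j,k}^{-}$ (as $f(e_{j})$ leaves $p$ to enter $e_{f(j,k)}$) and the incoming prong $Q_{j,k}^{+}$ (as it returns to $p$). Their labels $\chi_{k}^{-}, \chi_{k}^{+}$ form, in some order depending on $s(j,k)$, the pair $(\ell^{-}(e_{f(j,k)}), \ell^{+}(e_{f(j,k)}))$; a direct case check then gives the contribution of index $k$ to $N(e_{j})$ as exactly $\frac{1}{2}(\chi_{k}^{-} + \chi_{k}^{+})$. The crux of the argument, and the step I expect to be the main obstacle, is the geometric claim that for every interior preimage $x_{j,k}$ with $1 \leq k < j(j)$, the two adjacent prongs $Q_{j,k}^{+}$ and $Q_{j,k+1}^{-}$ carry \emph{opposite} labels. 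This uses the flattening condition together with non-folding applied to $f^{2}$ on the edge $e_{j}$: were both labels equal, $f^{2}(U_{j,k})$ would fold into a single half-edge $e_{a,1}$ or $e_{b,j(b)}$ with $x_{j,k}$ in its interior, contradicting local injectivity of $f^{2}$ on $e_{j}$. Granted this opposition, the interior contributions $\chi_{k}^{+} + \chi_{k+1}^{-}$ cancel pairwise and the sum telescopes to
\[
N(e_{j}) = \tfrac{1}{2}\sum_{k=1}^{j(j)}(\chi_{k}^{-} + \chi_{k}^{+}) = \tfrac{1}{2}\bigl(\chi_{1}^{-} + \chi_{j(j)}^{+}\bigr) = \tfrac{1}{2}\bigl(\ell^{-}(e_{j}) + \ell^{+}(e_{j})\bigr) = w^{*}(e_{j}),
\]
which finishes both (2) and (5).
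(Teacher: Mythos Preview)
Your proof is correct and rests on the same combinatorial core as the paper's: the non-folding of $f^{2}$ forces the two prong-labels adjacent at each interior point $x_{j,k}$ to be opposite (in the paper's notation this is exactly the statement $\alpha_{2k} \neq \alpha_{2k+1}$). Where you diverge is in how you extract $N(e_{j})$ from this fact. The paper argues by moves and cases: it shows that transposing a $(b,a)$ pair at positions $(2k,2k+1)$ to $(a,b)$ preserves the count of adjacent $aa$'s minus adjacent $bb$'s, normalizes to a standard sequence, and then does a case analysis on the end values $(\alpha_{1}, \alpha_{2j(i)})$. Your $\pm 1$ encoding together with the observation that each index contributes $\tfrac{1}{2}(\chi_{k}^{-}+\chi_{k}^{+})$ turns the same computation into a one-line telescoping sum with no case split. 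You also treat (5) differently: the paper deduces it from (2) by duality (identifying $\gamma_{G}^{u}$ with the transpose of $\gamma_{G}^{s}$ under $\mathrm{Hom}(-,\mathbb{Z})$), whereas you note at the outset that both (2) and (5) reduce to the single identity $N(e_{j}) = w^{*}(e_{j})$, which is a tidier unification. One small point you might make explicit: the final identification $\chi_{1}^{-} = \ell^{-}(e_{j})$ (and likewise at the terminal end) uses the standing normalization $f(e_{a,1}\cap U_{p}) \subset e_{a,1}$ and $f(e_{b,j(b)}\cap U_{p}) \subset e_{b,j(b)}$, since $Q_{j,1}^{-}$ is always one of these two fixed prongs.
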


\begin{proof}
The first and fourth statements follow immediately
from Theorem \ref{2-8}.

We compute $\gamma_{G}^{s}(w)$. Write
$\gamma_{G}^{s} = \sum_{i} k_{i} e_{i}$, for some
choice of integers $k_{i}$. For a fixed $k_{i}$, it
follows from the definitions of $\gamma_{G}^{s}$ and
$G$ that $k_{i}$ is the number of $1 \leq j \leq j(i)$
with $f(e_{i,j}) \in E_{a}$ minus the number
of such $j$ with $f(e_{i,j}) = E_{b}$.

Construct $\alpha$ in $\{ a, b \}^{2j(i)}$ by first
considering the sequence of open sets
\[
e_{i,1} \cap U_{p}, e_{i,1} \cap U_{i,1}, e_{i,2} \cap U_{i,1}
\ldots, e_{i,j(i)} \cap U_{i,j(i)-1}, e_{i,j(i)} \cap U_{p}.
\]
To each we apply $f^{2}$ and obtain a sequence of sets,
each being contained in either $e_{a,1}$ or
$e_{b, j(b)}$. The sequence of values of $a$ or $b$
are obtained accordingly. It follows from the
non-folding condition that for every $j$,
$\alpha_{2j} \neq \alpha_{2j+1}$. Also,
$f(e_{i,j}) $ is in $E_{a}$ if and only if
$\alpha_{2j-1} = \alpha_{2j} = a$ and is in
$E_{b}$ if and only if $\alpha_{2j-1} = \alpha_{2j} = b$.
So $k_{i}$ is the number of consecutive $a$'s minus the number
of consecutive $b$'s.

We claim that if $(\alpha_{2j}, \alpha_{2j+1}) = (b,a)$ for some
$j$, we may change it to $(a,b)$ without altering the value
$k_{i}$. There are four cases to consider, depending on the values
of $\alpha_{2j-1}$ and $\alpha_{2j+2}$. First, suppose they are both $a$. In this  case,
$(\alpha_{2j-1}, \alpha_{2j}, \alpha_{2j+1}, \alpha_{2j+2}) =(a,b,a,a)$ and contains exactly one pair of adjacent $a$'s and no adjacent $b$'s. Switching the places of the two central entries
does not change this fact and so does not alter $k_{i}$.
Next, we suppose that $(\alpha_{2j-1}, \alpha_{2j}, \alpha_{2j+1}, \alpha_{2j+2}) =(a,b,a,b)$. Here, we have no adjacent $a$'s or
$b$'s. Switching the two central entries results in a pair
of adjacent $a$'s and a pair of adjacent $b$'s, but the difference
is still zero. There are two other cases which are done similarly
and we leave to the reader.

Now we may assume that
 $(\alpha_{2j}, \alpha_{2j+1}) = (a,b)$ for all
$j$.
Now we consider the possible values of
$\alpha_{1}$ and $\alpha_{2j(i)}$.
If both are $a$, then there is exactly one pair
of adjacent $a$'s ($\alpha_{1}, \alpha_{2}$) and no adjacent $b$'s.
This means that $k_{i}=1$. On the other hand, it also
follows that $e_{i}$ is in $E_{a}$ in this case.
If $\alpha_{1}=a$ and $\alpha_{2j(i)}=b$, then there is one
pair of adjacent $a$'s and one pair of adjacent $b$'s.
If $\alpha_{1}=b$ and $\alpha_{2j(i)}=a$, then there are
 adjacent $a$'s or adjacent $b$'s. In both cases, we have
 $k_{i}=0$. But also in these cases, we see that
 $e_{i}$ is in $E_{0}$. Finally, in the case
 that $\alpha_{1}=\alpha_{2j(i)}=b$, we see that
 $k_{i}=-1$ and that $e_{i}$ is in $E_{b}$. We have now proved that
 $\gamma^{s}_{G}(w)=w$.

The third statement is clear. For the fifth, we can
regard $w$ as a group  homomorphism from $\mathbb{Z}$
 to $\mathbb{Z} G^{0}$.
Then the map $w^{*}$ is simply the dual of this map and
$\gamma_{G}^{u}$ is simply the dual of $\gamma^{s}_{G}$.
By dual, we mean to replace a group $H$ by $Hom(H, \mathbb{Z})$.
Moreover, we identify
$\mathbb{Z}A$ and $Hom(\mathbb{Z}A, \mathbb{Z})$, for
any set $A$, by the canonical isomorphism.
In  this way, the fifth
 statement simply follows from the first.
\end{proof}

In the case where the elements $w$ and $w^{*}$
 are non-zero, they also provide elements
 having analogous properties at the level
 of the inductive limit groups.
The following is an immediate consequence
of the last lemma and we omit the proof.

\begin{lemma}
\label{2-10}
Suppose that $(X, f)$ is a non-orientable
pre-solenoid satisfying
the conditions above. If we  identify $D^{s}(G_{X})$
with
$D^{s}(\Sigma_{X}, \sigma)$
and $D^{u}(G_{X})$ with
$D^{u}(\Sigma_{X}, \sigma)$, then we have the following.
\begin{enumerate}
\item
For all $n$, $[w, n] = [w, 1] \neq 0$ in
$D^{s}(\Sigma_{X}, \sigma)$ and the quotient group
$D^{s}(\Sigma_{X}, \sigma)/ <[w,1]>$ is torsion free.
\item
The map $w^{*}: D^{u}(\Sigma, \sigma) \rightarrow \mathbb{Z}$ defined by $w^{*}[a,n] = w^{*}(a)$,
for $a \in  \mathcal{A}$ is well-defined
and surjective.
\end{enumerate}
\end{lemma}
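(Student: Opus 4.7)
The plan is to lift the algebraic properties of $w$ and $w^{*}$ established in Lemma~\ref{2-9} to the inductive limits by invoking the universal property of the direct limit. Recall that $D^{s}(\Sigma_{X},\sigma)$ is realized as the inductive limit of the stationary system with stages $\mathbb{Z} G^{0}$ and connecting map $\gamma_{G}^{s}$, while $D^{u}(\Sigma_{X},\sigma)$ is the analogous limit with respect to $\gamma_{G}^{u}$.

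For part (1), the identity $\gamma_{G}^{s}(w) = w$ from Lemma~\ref{2-9}(2) forces $[w,n]$ to be independent of the stage $n$, which gives $[w,n] = [w,1]$ for all $n \geq 1$. The class $[w,1]$ would be zero in the inductive limit only if some iterate $(\gamma_{G}^{s})^{k}(w)$ vanished in $\mathbb{Z} G^{0}$, but these iterates all equal $w$, which is nonzero by Lemma~\ref{2-9}(1) under the non-orientability hypothesis. For the torsion-freeness of $D^{s}(\Sigma_{X},\sigma)/\langle [w,1]\rangle$, I would observe that because $\gamma_{G}^{s}$ fixes $w$, it descends to an endomorphism $\overline{\gamma_{G}^{s}}$ of $\mathbb{Z} G^{0}/\langle w \rangle$, and the short exact sequence of stationary systems yields an identification of $D^{s}(\Sigma_{X},\sigma)/\langle [w,1]\rangle$ with the inductive limit of $(\mathbb{Z} G^{0}/\langle w \rangle, \overline{\gamma_{G}^{s}})$. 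Since each stage is torsion-free by Lemma~\ref{2-9}(3) and an inductive limit of torsion-free abelian groups is torsion-free, the conclusion follows.

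For part (2), the identity $w^{*} \circ \gamma_{G}^{u} = w^{*}$ from Lemma~\ref{2-9}(5) is precisely the compatibility condition needed for $w^{*}$ to descend to a well-defined homomorphism $D^{u}(\Sigma_{X},\sigma) \to \mathbb{Z}$ sending $[a,n]$ to $w^{*}(a)$. Surjectivity then follows immediately from Theorem~\ref{2-8}: non-orientability of $(X,f)$ forces $E_{a} \cup E_{b} \neq \emptyset$, and choosing any $e_{i} \in E_{a}$ (or $E_{b}$) yields $w^{*}[e_{i},n] = 1$ (respectively $-1$), so the image generates $\mathbb{Z}$.

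The only step with any subtlety is the identification of $D^{s}(\Sigma_{X},\sigma)/\langle [w,1]\rangle$ with the inductive limit of the quotient system $(\mathbb{Z} G^{0}/\langle w \rangle, \overline{\gamma_{G}^{s}})$; once this is in place, both parts reduce to routine manipulations with inductive limits applied to the fixed vector $w$ and the invariant functional $w^{*}$.
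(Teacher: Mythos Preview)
Your proposal is correct and is precisely the intended expansion of what the paper treats as immediate: the paper itself omits the proof, stating only that the lemma ``is an immediate consequence of the last lemma,'' and your argument unpacks exactly how Lemma~\ref{2-9} parts (1)--(5) pass to the inductive limits. The one step you flag as having subtlety---identifying $D^{s}(\Sigma_{X},\sigma)/\langle [w,1]\rangle$ with the limit of the quotient system---is handled cleanly by the exactness of direct limits of abelian groups applied to the stationary short exact sequence $0 \to \mathbb{Z} \xrightarrow{\,w\,} \mathbb{Z}G^{0} \to \mathbb{Z}G^{0}/\langle w\rangle \to 0$, so there is nothing missing.
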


We define two  maps
\begin{eqnarray*}
\rho: (\Sigma_{X}, \sigma) & \rightarrow &  (X, f), \\
\overline{\rho}: (\Sigma_{X}, \sigma) & \rightarrow &  (\overline{X}, \overline{f})
\end{eqnarray*}
as follows.
For each point  $I= (I_{n})_{n \in \mathbb{Z}} \in \Sigma_X$, let
 \begin{eqnarray}\label{6}
    \{ \rho(I) \} & =  &  \bigcap^{\infty}_{N=0}
    \overline{\cap_{n=0}^{N} f^{-n}(I_{n})} \\
    \overline{\rho}(I) & = & (\rho(I), \rho \circ \sigma^{-1}(I),  \rho \circ \sigma^{-2}(I), \ldots )
 \end{eqnarray}

 To see that the set in the definition of
 $\rho(I)$ is a singleton, observe that the intersection
 $\cap_{n=0}^{N} f^{-n}(I_{n})$ is an open interval
 in $I_{0}$ and its closure is a closed interval.
 Moreover, the lengths of these intervals
 decrease geometrically with $N$. It follows that the
 intersection is a single point. (The definition
 looks a little unusual. This is essentially due to the fact that
 we are trying to use the sets $\pi^{-1}(e_{i})$ as a Markov
 partition, but they are slightly too large; the ends wrap
 around and meet at the vertex.)
   It is easy to verify that $\overline{\rho}$ is a factor map, that
   $\rho \circ \sigma = f \circ \rho$,
   and obviously
    $\rho = \pi \circ \overline{\rho}$.

We need to describe specific properties of the map
$\overline{\rho}: \Sigma_{X} \rightarrow \overline{X}$.
We begin with the following rather technical lemma. The proof
is essentially found in Yi \cite{Y}, but we provide a proof
here for completeness and because we will require some more
precise information about $\rho$. The point is that
our homology computations to be done later
 will require precise knowledge
about the points $I \neq J$ in $\Sigma_{X}$ with
$\overline{\rho}(I) = \overline{\rho}(J)$.  We will let $\mathcal{A}_{0} = \{ e_{i,1}, e_{i,j(i)} \mid 1 \leq i \leq m\}$.

\begin{lemma}
\label{2-11}
Let $(X,f)$ be a pre-solenoid
with unique vertex $p$ as above.
\begin{enumerate}
\item
Suppose that
$I=(I_{n})_{n \in \mathbb{Z}}$ and
$J=(J_{n})_{n \in \mathbb{Z}}$ in $\Sigma_{X}$
are such that
\begin{enumerate}
\item $\overline{\rho}(I) = \overline{\rho}(J)$,
\item $I_{0} \neq J_{0}$, and
\item $\rho(I) = \rho(J) \neq p $.
\end{enumerate}
Then, up to interchanging $I$ and $J$, we have
\begin{enumerate}
\item  $\rho(I) = \rho(J) \in f^{-1}\{ p \} - \{ p \}$,
\item $I_{n} = J_{n}$, for all $n < 0$,
\item $\{ I_{0}, J_{0} \} = \{ e_{i,j}, e_{i,j+1} \}$, for
some $i, 1 \leq j < j(i)$,
\item  $I_{1}$ and $J_{1}$ are both in $\mathcal{A}_{0}$
with $f(I_{1} \cap U_{p}) \subset e_{a,1}$
and $f(J_{1} \cap U_{p}) \subset e_{b, j(b)}$,
\item
$I_{n} = e_{a,1}, J_{n} = e_{b,j(b)}$, for all $n \geq 2$.
\end{enumerate}
\item
If $e \neq e_{a,1}, e_{b,j(b)}$
is any element of $\mathcal{A}_{0}$ with
$f(e \cap U_{p}) \subset e_{a,1}$, then there
exists $I \neq J$ satisfying the conclusion of
the first part.
If $e \neq e_{a,1}, e_{b,j(b)}$
 is any element of $\mathcal{A}_{0}$ with
$f(e \cap U_{p}) \subset e_{b,j(b)}$, then there
exists $I \neq J$ satisfying the conclusion of
the first part.
\end{enumerate}
\end{lemma}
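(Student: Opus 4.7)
The plan for Part (1) is to chase the sequence $y_n = \rho(\sigma^{-n}I) = \rho(\sigma^{-n}J)$, $n \geq 0$, produced by $\overline{\rho}(I) = \overline{\rho}(J)$. The intertwining $\rho \circ \sigma = f \circ \rho$ gives $y_{n-1} = f(y_n)$, and since $p$ is the unique vertex we have $f(p) = p$, so $y_0 = \rho(I) \neq p$ propagates inductively to $y_n \neq p$ for all $n \geq 0$. For (a) and (c), the hypothesis $I_0 \neq J_0$ together with $y_0 \in \overline{I_0} \cap \overline{J_0}$ forces $y_0$ onto the common boundary of two open sub-intervals; boundary points of the partition lie in $f^{-1}\{p\}$, so $y_0 \in f^{-1}\{p\} - \{p\}$, whence $y_0 = x_{i,j}$ and $\{I_0, J_0\} = \{e_{i,j}, e_{i,j+1}\}$ for some $1 \leq j < j(i)$. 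For (b), the same dichotomy at level $n \geq 1$ would, if $I_{-n} \neq J_{-n}$, place $y_n$ in $f^{-1}\{p\} - \{p\}$ and force $y_{n-1} = f(y_n) = p$, contradicting the previous step; hence $I_{-n} = J_{-n}$ for all $n \geq 1$.

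For (d) and (e), $\rho(\sigma^n I) = f^n(x_{i,j}) = p$ for $n \geq 1$, so each $I_n$ (and $J_n$) contains $p$ in its closure and therefore lies in $\mathcal{A}_0$. Points in $e_{i,j}$ tending to $x_{i,j}$ from within are carried by $f$ into one specific sub-interval of $e_{f(i,j)}$ adjacent to $p$, determined by the orientation of $f|_{e_{i,j}}$, and this sub-interval must be $I_1$. Applying $f$ once more, the preparation preceding Theorem \ref{2-8}, together with the standing reductions $f(e_{a,1} \cap U_p) \subset e_{a,1}$ and $f(e_{b,j(b)} \cap U_p) \subset e_{b,j(b)}$, confines $I_2$ to $\{e_{a,1}, e_{b,j(b)}\}$. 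Non-folding at $x_{i,j}$ forces $I_2 \neq J_2$, so after relabeling $I$ and $J$ we have $I_2 = e_{a,1}$, $J_2 = e_{b,j(b)}$; iterating $f(e_{a,1} \cap U_p) \subset e_{a,1}$ then yields $I_n = e_{a,1}$ for all $n \geq 2$, and symmetrically for $J$. The inclusion $f(I_1 \cap U_p) \subset I_2 = e_{a,1}$ supplies the remaining half of (d).

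For Part (2), given $e = e_{k,\ell} \in \mathcal{A}_0 \setminus \{e_{a,1}, e_{b,j(b)}\}$ with $f(e \cap U_p) \subset e_{a,1}$, the idea is to build $(I, J)$ with $I_1 = e$. The critical step is finding $(i, j)$ with $1 \leq j < j(i)$, $f(i,j) = k$, and $s(i,j)$ chosen so that $f$ carries a one-sided neighborhood of $x_{i,j}$ in $e_{i,j}$ into $e$; mixing supplies a preimage of $e_k$, the standing reduction $j(i) \geq 3$ ensures an interior position is available, and the exclusion $e \neq e_{a,1}, e_{b,j(b)}$ rules out the degenerate case in which every candidate is forced to the terminal position $j = j(i)$. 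Then set $I_0 = e_{i,j}$, $J_0 = e_{i,j+1}$, $I_1 = e$, take $J_1$ to be the element of $\mathcal{A}_0$ into which $f$ sends the opposite one-sided neighborhood of $x_{i,j}$ (so that non-folding forces $f(J_1 \cap U_p) \subset e_{b,j(b)}$), put $I_n = e_{a,1}$, $J_n = e_{b,j(b)}$ for $n \geq 2$, and choose any common sequence $I_n = J_n$, $n < 0$, respecting the edge relation (available by mixing). The verification $\rho(I) = \rho(J) = x_{i,j}$ and $\overline{\rho}(I) = \overline{\rho}(J)$ reuses the one-sided-continuity and expansion arguments from Part (1); the case $f(e \cap U_p) \subset e_{b,j(b)}$ is symmetric.

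The main obstacle I anticipate is the forward analysis in Part (1): chaining one-sided continuity, the flattening reduction that traps images in $\{e_{a,1}, e_{b,j(b)}\}$, and non-folding to distinguish $I$ from $J$ requires careful bookkeeping of which sub-interval of $e_{f(i,j)}$ absorbs the image at each step. In Part (2) the delicate point is the choice of $(i,j)$ with $j < j(i)$, and here the hypothesis $e \neq e_{a,1}, e_{b,j(b)}$ together with the standing assumption $j(i) \geq 3$ is precisely what is needed to avoid all preimage sub-intervals with the correct orientation being forced to terminal positions.
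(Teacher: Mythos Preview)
Your treatment of Part (1) is essentially the paper's argument. The contradiction you run for (b) (if $I_{-n}\neq J_{-n}$ then $y_n\in f^{-1}\{p\}$, whence $y_{n-1}=p$) is a repackaging of the paper's direct observation that $\rho(\sigma^n I)$ lies in the interior of a unique element of $\mathcal{A}$, and your iterative derivation of (d) and (e) is equivalent to the paper's neighbourhood-$V$ argument for each $M\ge 2$.

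In Part (2) your existence step has a gap. You seek $(i,j)$ with $1\le j<j(i)$, $f(i,j)=k$, and with the \emph{terminal} end of $e_{i,j}$ (the end at $x_{i,j}$) landing in $e$. But it can happen that every $(i',j')$ with $f(i',j')=k$ has the opposite orientation, so that only the \emph{initial} end of $e_{i',j'}$ lands in $e$; in that situation no candidate of your prescribed form exists at all. Your appeal to mixing produces some $(i',j')$ with $f(i',j')=k$, and your exclusion clause (``$e\neq e_{a,1},e_{b,j(b)}$ rules out the degenerate case in which every candidate is forced to $j=j(i)$'') only handles candidates that \emph{do} have the required orientation but sit at the boundary position; neither step, nor the remark about $j(i)\ge 3$, addresses the empty-candidate case. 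The paper circumvents this by arguing from surjectivity of $f$ rather than by searching for a sub-interval with a prescribed orientation: a point of $e\cap U_p$ has a preimage, that preimage lies near $f^{-1}\{p\}$, and it cannot be near $p$ because every element of $\mathcal{A}_0$ maps its $U_p$-end into $e_{a,1}\cup e_{b,j(b)}$ while $e$ is neither of these. Hence the preimage is near some $x_{i,j}$, and then \emph{one of} $f(e_{i,j}\cap U_{i,j})$ or $f(e_{i,j+1}\cap U_{i,j})$ lies in $e$; whichever does is declared $I_0$. From there your construction agrees with the paper's.
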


\begin{proof}
 We know that
$\rho(I) = \rho(J)$ is in
the closure of the sets $I_{0}$ and
 $J_{0}$. On the other hand, $I_{0}$ and $J_{0}$
are disjoint since they are unequal. It follows that
$\rho(I)$ is a boundary point
of each and is therefore in $f^{-1}\{ p \}$.
Since we assume that
$\rho(I) \neq p$, it is in the closure
of exactly two elements of $\mathcal{A}$.
More specifically, there
exist a unique $i, 1 \leq j < j(i)$ with
$ \{ I_{0}, J_{0} \} = \{ e_{i,j}, e_{i, j+1} \}$.

We now want to show that $I_{n} = J_{n} $,
for all $n < 0$.
We know that \newline
$f^{-n}(\rho(\sigma^{n}(I)))
= \rho(I) \neq p$
and so $\rho(\sigma^{n}(I)) $ is in the interior of
a unique element of $\mathcal{A}$. On the other hand,
it is also in the closure of
\[
\cap_{i=0}^{-n} f^{-i}(\sigma^{n}(I)_{i})  =
\cap_{i=0}^{-n} f^{-i}(I_{i+n})
  \subset I_{n}.
\]
This means that $I_{n}$ is the unique element of
$\mathcal{A}$ containing $\rho(\sigma_{n}(I)) $.
The same argument applies to $J$ and using
$\rho(\sigma^{n}(I))= \rho(\sigma^{n}(J))  $, we
conclude that $I_{n} = J_{n}$.

Next, we claim that for any integer $M \geq 2$, we have
$\{ I_{M}, J_{M} \} = \{ e_{a,1}, e_{b, j(b)} \}$ and
$I_{M+1} = I_{M}, J_{M+1} = J_{M}$.
Let $V$ be a neighbourhood
of $\rho(I)$ such that $f^{m}(V) \subset U_{p}$ for all
$1 \leq m \leq M + 2$ and so that $f^{M+2}$ is injective
on $V$. We know that for $2 \leq m \leq M+2$,
 $f^{m}(I_{0} \cap V)$ and
$f^{m}(J_{0} \cap V)$ are each contained in a set of the form
$e_{a,1} \cap U_{p}$ or $e_{b,j(b)} \cap U_{p}$. They cannot
be contained in the same one, because of the non-folding axiom.
Therefore, up to switching $I$ and $J$, we have
 $f^{M}(I_{0} \cap V)  \subset e_{a,1} \cap U_{p}$
$f^{M}(J_{0} \cap V) \subset e_{b,j(b)} \cap U_{p}$. These imply
$f^{M+1}(I_{0} \cap V) \subset e_{a,1}$ and
$f^{M+1}(J_{0} \cap V) \subset e_{b,j(b)}$. Since $\rho(I)$
is defined as an intersection, we may choose
an integer $L > M+1$ such that
$\cap_{n=0}^{L} f^{-n}(I_{n}) \subset V$. This implies
that $f^{M}(I_{0} \cap V) \subset I_{M}$ and
$f^{M+1}(I_{0} \cap V) \subset I_{M+1}$. But since the intervals
of $\mathcal{A}$ are pairwise disjoint and the set
$f^{M}(I_{0} \cap V)$ is contained in both $I_{M}$ and $e_{a,1}$,
we conclude these must be equal. Similarly, we find
$I_{M+1} = e_{a,1}$, $J_{M} = J_{M+1} = e_{b, j(b)}$.

For the proof of the second part, we consider
the first statement only. We know that $f$
is surjective. Moreover, since the elements
of $\mathcal{A}_{0}$ all map
to $e_{a,1}$ or $e_{b,j(b)}$, we may find
some $i,j$ where $f(U_{i,j}) $ meets $e \cap U_{p}$.
One of $f(e_{i,j} \cap U_{i,j})$ and $f(e_{i,j+1} \cap U_{i,j})$
meets $e$. Let us assume it is the former.
Then
 $f^{2}(e_{i,j} \cap U_{i,j})$
 is contained in $e_{a,1}$ and, by the non-folding
 axiom, $f^{2}(e_{i,j+1} \cap U_{i,j})$ must be contained in
 $e_{b,j(b)}$. Let $I_{0} = e_{i,j}, J_{0} = e_{i,j+1}$.
 Let $I_{1} = e$ and $J_{1}$ be the unique element of
 $\mathcal{A}_{0}$ containing
 $f(e_{i,j+1} \cap U_{i,j})$. Let
 $I_{n}= e_{a,1}, J_{n}=e_{b,j(b)}$, for all $n \geq 2$.
 The elements $I_{n}=J_{n}, n < 0,$ may be chosen arbitrarily
 so that $f(I_{n-1}) \supset I_{n}$, for all $n$.
\end{proof}

\begin{theorem}
\label{2-12} Suppose $(X,f)$ be a pre-solenoid
with unique vertex $p$ as above.
\begin{enumerate}
\item If $I=(I_{n})_{n \in \mathbb{Z}}$ and
$J=(J_{n})_{n \in \mathbb{Z}}$ in $\Sigma_{X}$ are such that
$\overline{\rho}(I) = \overline{\rho}(J)$, then, up to interchanging $I$ and $J$,
one of the following holds.
\begin{enumerate}
\item $I=J$.
\item $I_{n} = e_{a,1}, J_{n} = e_{b,j(b)}$, for all integers $n$.
\item There is a unique $N$ such that $I_{N} \neq J_{N}$,
$I_{n} = J_{n}$, for all $n < N$ and
$I_{n}= e_{a,1}, J_{n} = e_{b, j(b)}$, for all $n \geq N + 2$. Moreover, $N$ is characterized by the
condition $\overline{\rho} \circ \sigma^{N}(I) \in f^{-1}\{ p \} - \{ p \}$.
\end{enumerate}
\item
The map $\overline{\rho}$ is one-to-one on   $\Sigma_X-\bigcup^\infty_{m=0} \overline{\rho}^{-1} \circ \pi^{-1} \circ f^{-m-1}\{ p \}$.
\item The map $ \overline{\rho}:(\Sigma_X, \sigma_X) \rightarrow (\overline{X},\overline{f})$ is at most two-to-one.
\item
The map $ \overline{\rho}:(\Sigma_X, \sigma_X)  \rightarrow (\overline{X},\overline{f})$ is  an $s$-bijective map.
\end{enumerate}
\end{theorem}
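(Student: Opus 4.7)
The plan is to prove (1) by iterated application of Lemma \ref{2-11} and then deduce (2)--(4) as essentially formal consequences. For (1), fix $I \neq J$ with $\overline{\rho}(I) = \overline{\rho}(J)$ and let $S = \{N \in \mathbb{Z} : I_N \neq J_N\}$, which is nonempty. Since $\overline{\rho} \circ \sigma = \overline{f} \circ \overline{\rho}$, the shifted pair $\sigma^N(I), \sigma^N(J)$ still has common $\overline{\rho}$-image for every $N$, so Lemma \ref{2-11} applies at $N \in S$ whenever $\rho(\sigma^N(I)) \neq p$.

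First I would handle the generic alternative: some $N \in S$ has $\rho(\sigma^N(I)) \neq p$. Choose the smallest such $N$. Lemma \ref{2-11} then yields $I_n = J_n$ for $n < N$, $\{I_N, J_N\} = \{e_{i,j}, e_{i,j+1}\}$, $I_{N+1}$ and $J_{N+1}$ distinct elements of $\mathcal{A}_0$, and $I_n = e_{a,1}, J_n = e_{b,j(b)}$ for $n \geq N+2$, which is case (c); the characterization of $N$ follows from the description of $\rho(\sigma^N(I))$ in Lemma \ref{2-11}. The remaining alternative is $\rho(\sigma^N(I)) = p$ for every $N \in S$. Since $f(p) = p$, the set $\{N : \rho(\sigma^N(I)) = p\}$ is upward closed, and a descent argument, repeatedly applying Lemma \ref{2-11} at index one less whenever its hypothesis holds, pushes the alternative all the way down, forcing $\rho(\sigma^N(I)) = p$ for all $N \in \mathbb{Z}$; equivalently $\overline{\rho}(I) = (p, p, \ldots)$. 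A direct analysis of preimages of this fixed point, using that the elements of $\mathcal{A}_0$ whose image under $f$ remains in a small neighbourhood $U_p$ of $p$ are only $e_{a,1}$ and $e_{b,j(b)}$, identifies $I$ and $J$ as the two constant sequences $(e_{a,1})_n$ and $(e_{b,j(b)})_n$, which is case (b).

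The remaining parts are now mostly bookkeeping from (1). For (2), in case (b) one has $\rho(I) = p \in f^{-1}\{p\}$, while in case (c) the condition $\rho(\sigma^N(I)) \in f^{-1}\{p\}$ translates, via $\rho \circ \sigma^N = f^N \circ \rho$, to $\rho(I) \in f^{-(N+1)}\{p\}$ when $N \geq 0$ or to $\rho(I) = p$ when $N < 0$; either way $I$ lies in the excluded union. For (3), any three distinct preimages would have to pairwise satisfy case (b) or case (c), and comparing the structural information (eventual tails $e_{a,1}$ versus $e_{b,j(b)}$, and the initial agreement segment) across the three pairs quickly produces a contradiction.

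For (4) the key point is injectivity of $\overline{\rho}$ restricted to $\Sigma_X^s(I)$. Stable equivalence in $\Sigma_X$ is equivalent to eventual coincidence of coordinates, but cases (b) and (c) of part (1) both give $I_n \neq J_n$ for all sufficiently large $n$ (since $e_{a,1} \neq e_{b,j(b)}$ when $j(b) \geq 3$). Hence stable equivalence together with $\overline{\rho}(I) = \overline{\rho}(J)$ leaves only case (a), $I = J$. Surjectivity of $\overline{\rho}|_{\Sigma_X^s(I)}$ onto $\overline{X}^s(\overline{\rho}(I))$ is a standard property of finite-to-one factor maps between Smale spaces, following from the local bracket structure and compactness. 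The step I expect to be hardest is the collapsing argument in case (b) of part (1), namely verifying that the only $\overline{\rho}$-preimages of $(p,p,\ldots)$ are the two constant sequences; this requires a careful study of the sub-shift of $\Sigma_X$ supported on $\mathcal{A}_0$ and is where the specific local picture near $e_{a,1}$ and $e_{b,j(b)}$ enters.
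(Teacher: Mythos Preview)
Your overall plan matches the paper's: reduce part (1) to Lemma \ref{2-11} for case (c) plus a direct analysis of $\overline{\rho}^{-1}(p,p,\ldots)$ for case (b), then read off (2)--(4). The one real weakness is your descent argument. The paper splits on whether $\rho(\sigma^{N}(I)) = p$ for \emph{all} $N$; in the all-$p$ case it argues directly that $I_{N}\cap f^{-1}(I_{N+1})\cap f^{-2}(I_{N+2})$ has $p$ in its closure, so lies in some $e_{i,1}$ or $e_{i,j(i)}$, and then that $f(U_{p}\cap\cdots)$ forces $I_{N+1}\in\{e_{a,1},e_{b,j(b)}\}$ with $I_{N+1}=I_{N+2}$, pinning both sequences to constants. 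You instead split on $N\in S$ and then try to descend by ``applying Lemma \ref{2-11} at index one less,'' but this cannot work: below $\min S$ the coordinates agree, so hypothesis (b) of Lemma \ref{2-11} fails and the lemma is unavailable precisely where you need it. The bridge you actually need is the observation that if $N^{*}=\min S$ and $\rho(\sigma^{N^{*}-1}(I))=x_{i,j}\neq p$, then $I_{N^{*}-1}=J_{N^{*}-1}$ together with the local picture at $x_{i,j}$ already forces $I_{N^{*}}=J_{N^{*}}$, contradicting $N^{*}\in S$; hence $\rho(\sigma^{N^{*}}(I))\neq p$ and Lemma \ref{2-11} applies at $N^{*}$ after all. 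Also, your description of the preimage analysis is slightly off: it is not that $f$ sends only $e_{a,1},e_{b,j(b)}$ back into $U_{p}$, but that $f(e_{a,1}\cap U_{p})\subset e_{a,1}$ and $f(e_{b,j(b)}\cap U_{p})\subset e_{b,j(b)}$ while every other $e\in\mathcal{A}_{0}$ is sent into one of these two. For (4), the paper invokes Theorem 2.5.8 of \cite{P1} (for irreducible targets, $s$-resolving implies $s$-bijective) in place of your ``standard property.''
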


\begin{proof}
The Lemma \ref{2-11}
proves the first statement under the conditions $I \neq J$ and
$\overline{\rho} \circ \sigma^{N}(I) \neq p$, for some $N$.
It remains to consider the case $I \neq J$ and
$\overline{\rho} \circ \sigma^{N}(I) =p$, for all $N$.

But then for any $N$,
$I_{N} \cap f^{-1}(I_{N+1}) \cap f^{-2}(I_{N+2})$ must
contain $p$ in its closure. It follows that
this set is contained in either
$ e_{i,1}$ or $ e_{i,j(i)}$ for some
$i$. In the first case,
 $f(U_{p} \cap I_{N} \cap f^{-1}(I_{N+1}) \cap f^{-2}(I_{N+2}))$ is a non-empty subset of $f(U_{p} \cap e_{i,1}) \subset e_{a,1}$  and also
of $I_{N+1}$.  This implies
that $I_{N+1} = e_{a,1}$. In addition,
$f^{2}(U_{p} \cap I_{N} \cap f^{-1}(I_{N+1}) \cap f^{-2}(I_{N+2}))$ is a non-empty subset of $f^{2}(U_{p} \cap e_{i,1}) \subset e_{a,1}$  and also
of $I_{N+2}$. This implies
that $I_{N+2} = e_{a,1}$. In the latter case, the same argument proves that
$I_{N+1} = I_{N+2} = e_{b, j(b)}$. What
we have shown is that, for any $N$, $I_{N+1}$
is either $e_{a,1}$ or
$e_{b,j(b)}$ and $I_{N+1}=I_{N+2}$. The
conclusion follows since this holds for all $N$.

The second and third statements are immediate.
The fourth follows from the first: no non-trivial pair
$I$ and $J$ with $\overline{\rho}(I)  = \overline{\rho}(J)$
are stably equivalent. Thus,
$\overline{\rho}$ is $s$-resolving. The shift
$\Sigma_{X}$ is irreducible from the hypotheses on $(X, f)$, so
$\overline{\rho}$ is $s$-bijective by Theorem 2.5.8 of \cite{P1}.
\end{proof}

\section{Homology}

\subsection{Statement of the results}

We state our two main theorems as follows.

 \begin{theorem}
 \label{3-1}
  Let $(X, f)$ be a pre-solenoid
   and $(\overline{X},\overline{f})$ be its
   associated  one-solenoid. If $(X, f)$ is orientable, then
\begin{eqnarray*}
H^s_N(\overline{X},\overline{f})= \left \{\begin{array}{lll}
D^s(\Sigma_{X},\sigma) \  &  \ N=0,\\ \Bbb Z \ & \  N=1,\\ 0  \ &  \  N\neq 0,1. \end{array} \right.
\end{eqnarray*}

If $(X,f)$ is not orientable, then
\begin{eqnarray*}
H^s_N(\overline{X},\overline{f})= \left \{\begin{array}{lll}
D^s(\Sigma_{X},\sigma)/{<2[w,1]>} &  \ N=0,\\ 0  \ &  \  N\neq 0. \end{array} \right.
\end{eqnarray*}
\end{theorem}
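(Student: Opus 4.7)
The plan is to exploit the $s$-bijective factor map $\overline{\rho}: (\Sigma_X, \sigma) \to (\overline{X}, \overline{f})$ of Theorem \ref{2-12}(4) together with the chain complex construction of \cite{P1}. The $N$-th term of that complex is a reduced form (antisymmetric modulo degenerate tuples) of the dimension group $D^s(\Sigma_X^{(N)})$ of the iterated fibered product $\Sigma_X^{(N)} = \{(I^0, \ldots, I^N) \in \Sigma_X^{N+1} : \overline{\rho}(I^0) = \cdots = \overline{\rho}(I^N)\}$. Since Theorem \ref{2-12}(3) tells us $\overline{\rho}$ is at most two-to-one, any fiber has at most two distinct points, so every $(N+1)$-tuple with $N \geq 2$ contains a repeat and its antisymmetric part vanishes. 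The chain complex therefore reduces to
\begin{equation*}
0 \longrightarrow C_1 \xrightarrow{\partial} C_0 \longrightarrow 0,
\end{equation*}
with $C_0 = D^s(\Sigma_X, \sigma)$.

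Next, I identify $C_1$. By Theorem \ref{2-12}(1) and Lemma \ref{2-11}, the non-diagonal fibers of $\overline{\rho}$ consist of two swap-related fixed pairs $q = (I^a, I^b)$ and $q' = (I^b, I^a)$, where $I^a$ and $I^b$ are the constant sequences at $e_{a,1}$ and $e_{b, j(b)}$, together with a continuum of ``forking'' pairs that are heteroclinic from the diagonal (in backward time) to $\{q, q'\}$ (in forward time). The expected outcome is $C_1 \cong \mathbb{Z}$, generated by an antisymmetric class $c$ built from $q$: the diagonal contribution is killed by the antisymmetrization, and the wandering forking orbits collapse in the reduced dimension group onto the fixed-pair classes $[q], [q']$.

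The boundary $\partial$ is the alternating sum of the two coordinate projections $p_{0*}, p_{1*}: D^s(\Sigma_X^{(1)}) \to D^s(\Sigma_X)$. Applied to $c$ it produces a multiple of $[I^a] - [I^b]$ in $D^s(\Sigma_X, \sigma)$. A computation paralleling the proof of Lemma \ref{2-9}(2) --- tracking, for each $e_i \in E$, the signed count of sub-edges mapping to $e_a$ versus $e_b$ and organizing the result by the partition $E = E_a \sqcup E_b \sqcup E_0$ of Theorem \ref{2-8} --- should show that the push-forward, when lifted through the inductive limit defining $D^s(\Sigma_X, \sigma)$, lands at $2[w,1]$ where $w = \mathrm{Sum}(E_a) - \mathrm{Sum}(E_b)$. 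The factor of $2$ records the antisymmetric pairing of $q$ with $q'$.

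The theorem then follows in both cases. In the orientable case, Lemma \ref{2-9}(1) gives $w = 0$, so $\partial = 0$ and the complex computes $H^s_0(\overline{X}, \overline{f}) = D^s(\Sigma_X, \sigma)$ and $H^s_1(\overline{X}, \overline{f}) = C_1 = \mathbb{Z}$, with all higher groups vanishing. In the non-orientable case, $[w, 1]$ is nonzero and $D^s(\Sigma_X, \sigma) / \langle [w, 1] \rangle$ is torsion-free by Lemma \ref{2-10}, so $2[w,1]$ generates an infinite-order subgroup and $\partial$ is injective; hence $H^s_1(\overline{X}, \overline{f}) = 0$ and $H^s_0(\overline{X}, \overline{f}) = D^s(\Sigma_X, \sigma) / \langle 2[w, 1] \rangle$. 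I anticipate the main obstacle to lie in the second and third steps: rigorously verifying $C_1 \cong \mathbb{Z}$ by controlling the continuum of heteroclinic forking orbits in the SFT structure on $\Sigma_X^{(1)}$, and then precisely matching the combined push-forward with the full element $2[w, 1]$ via the dynamics of $\gamma^s_{G_X}$ in the inductive limit.
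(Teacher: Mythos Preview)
Your proposal follows essentially the same route as the paper: reduce to a two-term complex via Theorem~\ref{3-7} (using that $\overline{\rho}$ is $s$-bijective and at most two-to-one), identify the degree-one term as infinite cyclic (the paper's Lemma~\ref{3-8}), compute the boundary of its generator as $2[w,1]$ (the paper's Lemma~\ref{3-10}), and read off the result. Your anticipated obstacles --- controlling the forking orbits to pin down $C_1\cong\mathbb{Z}$ and matching the push-forward to $2[w,1]$ --- are exactly the content of those two lemmas, and your proposed attacks (via Theorem~\ref{2-12}/Lemma~\ref{2-11} and the combinatorics of Lemma~\ref{2-9}) are the ones the paper uses.

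One terminological point to correct: for $H^{s}$ the complex in \cite{P1} uses the \emph{quotient} functor $D^{s}_{\mathcal{Q}}$ (coinvariants under the $S_{N+1}$-action), not the alternating/antisymmetric functor $D^{u}_{\mathcal{A}}$ that appears in the $H^{u}$ computation. In particular, the generator of $C_1$ is the class $Q[(e_{a,1}^{L-1},e_{b,j(b)}^{L-1}),1]$, and the factor of $2$ in $\partial$ does not arise from an antisymmetric pairing of $q$ with $q'$ but from the fact that when $e_i\in E_a$ both endpoints $e_{i,1}$ and $e_{i,j(i)}$ contribute to $\delta_0$ (and neither to $\delta_1$), with the roles reversed for $e_i\in E_b$. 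This does not affect the validity of your outline --- repeated tuples still vanish in $\mathcal{Q}$, so the complex still collapses to two terms and $C_1$ is still infinite cyclic --- but you should use the correct functor when you write out the details.
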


\begin{theorem}
\label{3-2}
 Let $(X, f)$ be a pre-solenoid
   and $(\overline{X},\overline{f})$ be its
   associated  one-solenoid. If $(X, f)$
  is orientable, then
\begin{eqnarray*}
H^u_N(\overline{X}, \overline{f})= \left \{\begin{array}{lll}
D^u(\Sigma_X,\sigma) \  &  \ N=0,\\ \Bbb Z \ & \  N=1,\\ 0  \ &  \  N\neq 0,1. \end{array} \right.
\end{eqnarray*} If $(X, f)$ is not orientable, then
\begin{eqnarray*}
H^u_N(\overline{X},\overline{f}) = \left \{\begin{array}{lll}
Ker (w^*)  &  \ N=0,\\ \Bbb Z_2 \ & \  N=1,\\ 0  \ &  \  N\neq 0,1. \end{array} \right.
\end{eqnarray*}
\end{theorem}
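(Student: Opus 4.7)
My approach is to apply Putnam's machinery \cite{P1} to the $s$-bijective, at-most-$2$-to-$1$ factor map $\overline{\rho}:(\Sigma_X,\sigma)\to(\overline{X},\overline{f})$ of Theorem \ref{2-12}. This produces a length-one complex computing $H^u_*(\overline{X},\overline{f})$:
\[
0 \longrightarrow D^u(\Sigma_X,\sigma) \xrightarrow{\partial} G \longrightarrow 0,
\]
where $G$ is the antisymmetric part (under the swap of coordinates) of $D^u$ of the off-diagonal part of $\Sigma_X\times_{\overline{X}}\Sigma_X$, and $\partial$ is the alternating difference of the two natural maps induced by the projections $(I,J)\mapsto I$ and $(I,J)\mapsto J$. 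Thus $H^u_0=\ker(\partial)$, $H^u_1=\operatorname{coker}(\partial)$, and $H^u_N=0$ for $N\ge 2$.

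The first step is to show $G\cong\mathbb{Z}$. By Theorem \ref{2-12}, the off-diagonal part of the fibered product decomposes into (a) the size-two orbit of the constant pair $\{(e_{a,1}^\infty,e_{b,j(b)}^\infty),(e_{b,j(b)}^\infty,e_{a,1}^\infty)\}$, both points $\sigma$-fixed, and (b) the merging pairs, each indexed by a unique time $N$ at which the two sequences first diverge. Every merging pair wanders forward into the constant pair under $\sigma$, hence the merging set is non-recurrent and contributes trivially to $D^u$. The constant-pair orbit yields $\mathbb{Z}^2$ with the swap exchanging the two summands, whose antisymmetric part is $\mathbb{Z}$ generated by the antisymmetrized constant-pair class.

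The main computation is to show $\partial=2w^*$, where $w^*:D^u(\Sigma_X,\sigma)\to\mathbb{Z}$ is the surjection of Lemma \ref{2-10}(2). The intuition is that $\partial$ evaluated on a generator $e_i\in\mathbb{Z} E=\mathbb{Z} G_X^0$ picks up contributions near the vertex $p$ from the two endpoints of $e_i$: if $e_i\in E_a$, both endpoints contribute on the $a$-side for $+2$; if $e_i\in E_b$, both contribute on the $b$-side for $-2$; and if $e_i\in E_0$, the two endpoints contribute with opposite signs and cancel. In the orientable case, Lemma \ref{2-9}(4) forces $w^*=0$ so $\partial=0$, yielding $H^u_0=D^u(\Sigma_X,\sigma)$ and $H^u_1=\mathbb{Z}$. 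In the non-orientable case, $2w^*$ has kernel $\ker(w^*)$ and image $2\mathbb{Z}$, yielding $H^u_0=\ker(w^*)$ and $H^u_1=\mathbb{Z}/2\mathbb{Z}=\mathbb{Z}_2$.

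The principal obstacle will be the rigorous verification that $\partial=2w^*$. This requires following the Putnam boundary construction through Lemma \ref{2-11}'s precise description of which pairs $(I,J)$ get identified under $\overline{\rho}$, tracking the orientation signs $s(i,j)$ and the sign cancellations from antisymmetrization, and ultimately establishing the sign-counting formula for $\partial(e_i)$ in terms of membership in $E_a$, $E_b$, or $E_0$.
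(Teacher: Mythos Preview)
Your proposal is correct and follows essentially the same route as the paper: Theorem~\ref{3-7} reduces the computation to a two-term complex, Lemma~\ref{3-8} identifies the degree-one group as infinite cyclic via exactly the decomposition of off-diagonal pairs you describe, and Lemma~\ref{3-11} establishes that the boundary map is $2w^{*}$ by the endpoint-counting argument you sketch. The paper's presentation is more explicitly combinatorial (working at the level of $G^{L}_{1}$ and the maps $\gamma^{u}$, $\delta_{0}$, $\delta_{1}$) where yours is phrased dynamically (non-recurrence of merging pairs), but the underlying argument is the same.
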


The proof of these two results is quite long and we defer it
to the next subsection.
We note some consequences.

\begin{corollary}
\label{3-3}
Let $(X, f)$ be a pre-solenoid and
$(\overline{X},\overline{f})$ be its associated
 one-solenoid.
If $(X,f)$ is orientable then all of the  homology groups
of $(\overline{X}, \overline{f})$ are torsion free.

 If $(X, f)$ is not orientable, then we have
 \[
Tor(H^{s}_{0}(\overline{X},\overline{f})) \cong
Tor(H^{u}_{1}(\overline{X},\overline{f})) \cong
\mathbb{Z}_{2},
 \]
where $Tor(H)$ denotes the torsion subgroup of $H$,
 and the remaining homology groups are torsion free.
\end{corollary}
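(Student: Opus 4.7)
The plan is to read the homology groups off Theorems \ref{3-1} and \ref{3-2} and compute the torsion subgroup of each, using only the fact that the dimension groups are torsion free (as inductive limits of free abelian groups, which preserve torsion-freeness under direct limits) together with Lemma \ref{2-10}. In the orientable case everything is immediate: $H^s_0$ and $H^u_0$ are dimension groups, $H^s_1$ and $H^u_1$ are copies of $\mathbb{Z}$, and all remaining groups vanish, so no torsion is present anywhere.

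In the non-orientable case, four of the listed groups are handled by direct inspection: $H^u_0 = \mathrm{Ker}(w^*)$ is a subgroup of the torsion-free dimension group $D^u(\Sigma_X,\sigma)$; the groups $H^s_N$ for $N \neq 0$ and $H^u_N$ for $N \neq 0,1$ vanish; and $H^u_1 = \mathbb{Z}_2$ is already entirely torsion, which supplies one of the two required isomorphisms. The main step is therefore to identify the torsion subgroup of $H^s_0 = D^s(\Sigma_X,\sigma)/\langle 2[w,1]\rangle$. I would invoke Lemma \ref{2-10}(1), which asserts that $[w,1]$ is nonzero in $D^s(\Sigma_X,\sigma)$ and that $D^s(\Sigma_X,\sigma)/\langle[w,1]\rangle$ is torsion free. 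Since $D^s(\Sigma_X,\sigma)$ is itself torsion free, the subgroup $\langle[w,1]\rangle$ is infinite cyclic, so I can apply the short exact sequence
\[
0 \to \langle[w,1]\rangle/\langle 2[w,1]\rangle \to D^s(\Sigma_X,\sigma)/\langle 2[w,1]\rangle \to D^s(\Sigma_X,\sigma)/\langle[w,1]\rangle \to 0
\]
whose left-hand term is $\mathbb{Z}_2$. Because the right-hand term is torsion free, any torsion element of the middle term maps to zero on the right and hence lies in the image of $\mathbb{Z}_2$; conversely the image of the generator of $\mathbb{Z}_2$ is a nonzero element of order two. Thus $\mathrm{Tor}(H^s_0(\overline{X},\overline{f})) \cong \mathbb{Z}_2$, as required.

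No serious obstacle is anticipated: once Theorems \ref{3-1}, \ref{3-2} and Lemma \ref{2-10} are in hand, the corollary amounts to elementary bookkeeping for finitely generated abelian groups. The only point warranting a moment's care is the observation that $\langle[w,1]\rangle \cong \mathbb{Z}$ rather than being finite cyclic, which is precisely where the torsion-freeness of the ambient dimension group enters; without it the short exact sequence argument could produce a different torsion subgroup.
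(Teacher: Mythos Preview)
Your proof is correct and is precisely the argument the paper leaves implicit: the corollary is stated without proof as an immediate consequence of Theorems~\ref{3-1} and~\ref{3-2} together with Lemma~\ref{2-10}, and your short exact sequence computation of $\mathrm{Tor}(H^s_0)$ is the natural way to extract that consequence. One cosmetic remark: the dimension groups need not be finitely generated, so ``bookkeeping for finitely generated abelian groups'' slightly overstates the structure available, but your actual argument uses only torsion-freeness and nowhere invokes finite generation.
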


Notice, in particular, that this means that the notion
of orientability is independent of the choice of
$(X,f)$. This provides a new proof of Theorem
\ref{2-4}.

 \begin{example}
 \label{3-4}
 Suppose $(X,f), (X,g) \ and \  (X,k)  $  be the pre-solenoids defined in example \ref{2-7}, then  $H^s_N(\overline{X},\overline{f})=H^s_N(\overline{X},\overline{g})=H^s_N(\overline{X},\overline{k})$ for each  $N \geq 0$
 \begin{eqnarray*}
H^s_N(\overline{X},\overline{f})= \left \{\begin{array}{lll}
\{ (i,i + j):i \in \Bbb Z[1/3],j \in \Bbb Z  \} \  &  \ N=0,\\ \Bbb Z \ & \  N=1,\\ 0  \ &  \  N\neq 0,1. \end{array} \right.
\end{eqnarray*}

 Also for the 1-solenoid $(\overline{X},\overline{h})$ defined in that example:

\begin{eqnarray*}
H^s_N(\overline{X},\overline{h})= \left \{\begin{array}{lll}
\{ (i,i + j):i \in \Bbb Z[1/3],j \in \Bbb  Z  \} / 2\Bbb Z(-1,1) &  \ N=0,\\ 0  \ &  \  N\neq 0. \end{array} \right.
\end{eqnarray*}
\end{example}

\begin{example}
\label{3-5}
 Suppose $(X,f), (X,g)$ and $  (X,k)  $  are the pre-solenoids defined in Example \ref{2-7}, then $H^u_N(\overline{X},\overline{f})=H^u_N(\overline{X},\overline{g})=H^u_N(\overline{X},\overline{k})$
 \begin{eqnarray*}
H^u_N(\overline{X},\overline{f})= \left \{\begin{array}{lll}
\{ (i,i + j):i \in \Bbb Z[1/3],j \in \Bbb Z  \} \  &  \ N=0,\\ \Bbb Z \ & \  N=1,\\ 0  \ &  \  N\neq 0,1. \end{array} \right.
\end{eqnarray*}

 And  also for the pre-solenoid $(X,h)$ defined  in that example:

\begin{eqnarray*}
H^u_N(\overline{X},\overline{h})= \left \{\begin{array}{lll}
\Bbb Z[1/3] \  &  \ N=0,\\ \Bbb Z_2 \ & \  N=1,\\ 0  \ &  \  N\neq 0,1. \end{array} \right.
\end{eqnarray*}
\end{example}

The Cech cohomology of the space $\overline{X}$
may be computed as follows.
First, $\overline{X}$   is written as the inverse
limit of the stationary system with space
$X$ and map $f$ and so its cohomology is
the direct limit of the stationary system of groups
$\check{H}^{*}(X)$ with maps $f^{*}$. Since $X$ is
the wedge of $m$ circles, its
cohomology is $\mathbb{Z}$ in dimension zero,
$\mathbb{Z}^{m}$ in dimension one and zero
in all other dimensions. Moreover, a simple direct
computation shows that in the special case that $(X,f)$
is orientable, then the map $f^{*}$ is the identity
 in dimension zero and agrees with
$\gamma^{u}_{G}$ in dimension one if we identify
$\mathbb{Z} G^{0}$ with $\mathbb{Z}^{m}$
in an obvious way. Therefore, we have the following.

\begin{theorem}
\label{3-6}
If $(\overline{X},\overline{f})$  is
an orientable one-solenoid then
\begin{eqnarray*}
H^{u}_{0}(\overline{X}, \overline{f})
& \cong  & \check{H}^{1}(\overline{X}) \\
 H^{u}_{1}(\overline{X}, \overline{f})
& \cong  & \check{H}^{0}(\overline{X}).
\end{eqnarray*}
\end{theorem}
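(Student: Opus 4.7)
The plan is to combine the continuity of Čech cohomology under inverse limits with the homology computation of Theorem \ref{3-2}. Since $\overline{X}$ is, by construction, the inverse limit of the stationary system $X \leftarrow X \leftarrow X \leftarrow \cdots$ whose bonding maps are all $f$, continuity of Čech cohomology (valid for inverse limits of compact Hausdorff spaces) gives a natural isomorphism
\[
\check{H}^N(\overline{X}) \;\cong\; \varinjlim\bigl(\check{H}^N(X),\, f^*\bigr).
\]
Because $X$ is a connected wedge of $m$ circles, its Čech cohomology is concentrated in degrees $0$ and $1$, with $\check{H}^0(X) \cong \mathbb{Z}$ and $\check{H}^1(X) \cong \mathbb{Z}^m$, a natural basis of the latter being dual to the oriented $1$-cycles $e_1, \dots, e_m$.

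Next I compute $f^*$. On $\check{H}^0$ it is clearly the identity, since $X$ is path-connected. On $\check{H}^1$, I identify $\mathbb{Z}^m$ with $\mathbb{Z}G_X^0$ by sending the dual cocycle of $e_i$ to the vertex $e_i$ of $G_X$. Under this identification, the orientable hypothesis and the wrapping data of Lemma \ref{2-6} (with all orientation signs $s(i,j)=+1$) force $f^*$ to act exactly as $\gamma^u_G = t\circ i^*$; this is the remark recorded in the excerpt immediately before the theorem. Taking direct limits therefore yields
\[
\check{H}^0(\overline{X}) \cong \mathbb{Z}, \qquad \check{H}^1(\overline{X}) \cong \varinjlim\bigl(\mathbb{Z}G_X^0,\, \gamma^u_G\bigr) = D^u(\Sigma_X, \sigma),
\]
with all higher groups vanishing.

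Finally, the orientable case of Theorem \ref{3-2} gives $H^u_0(\overline{X},\overline{f}) \cong D^u(\Sigma_X,\sigma)$ and $H^u_1(\overline{X},\overline{f}) \cong \mathbb{Z}$, and the two asserted isomorphisms are immediate by comparison. The only non-formal step is the identification $f^* = \gamma^u_G$ on $\check{H}^1(X)$; once the oriented partition $\{e_{i,j}\}$ of each edge is in place, this amounts to reading off how each subinterval $e_{i,j}$ contributes to $f^*$ evaluated on the dual cocycle of its image. The orientability hypothesis is essential at this step: signs would otherwise enter and destroy the clean identification, which is consistent with the appearance of $\mathbb{Z}_{2}$-torsion in the non-orientable case of Theorem \ref{3-2}.
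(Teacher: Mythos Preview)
Your proposal is correct and follows essentially the same route as the paper: the paragraph immediately preceding Theorem~\ref{3-6} \emph{is} the paper's proof, and you have reproduced its steps---continuity of \v{C}ech cohomology under inverse limits, the cohomology of a wedge of circles, the identification of $f^{*}$ with $\gamma^{u}_{G}$ in the orientable case, and comparison with Theorem~\ref{3-2}---in the same order and with the same emphasis.
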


On the other hand, if $(X, f)$ is not orientable, then
the map $f^{*}$ notices the difference
in orientations while $\gamma^{u}_{G}$ does not.
In particular, the map $h$ of Example \ref{2-7}
induces an isomorphism
on $\check{H}^{*}(X)$ and so we have
 $\check{H}^{1}(\overline{X}) \cong \mathbb{Z}^{2}$
 and hence this group is
not isomorphic to $H^{u}_{0}(\overline{X}, \overline{h})$.

\subsection{Proofs}

In general, the computation of the
homology groups \newline
$H^{s}_{N}(X, \varphi), H^{s}_{N}(X, \varphi), N \in \mathbb{Z}$,
 for a Smale space $(X, \varphi)$
is a rather complicated  business involving
 double complexes. However, we may
 appeal to two special features in our case.
 The first is that our solenoid is the image
 of a shift of finite type under an $s$-bijective
 factor map $\rho$. A general investigation
 of such Smale spaces can be found in Wieler \cite{W4}.
 This reduces the double
 complexes to usual complexes indexed by the
 integers. The second feature is that the map
 $\rho$ is at most two-to-one.
 This means that there are only two non-zero
 entries in the complex. Specifically,
 Theorem 4.2.12 and Theorem 7.2.1 of \cite{P1}
 provide us with
 the following description.

\begin{theorem}\cite{P1}
\label{3-7}
Let $(X,f)$ be a pre-solenoid, $(\overline{X},\overline{f} )$ its associated one-solenoid
and  $\overline{\rho}:(\Sigma_{X},\sigma)\rightarrow (\overline{X},\overline{f} )$ be the $s$-bijective factor map of the last section.
\begin{enumerate}
\item
The homology  $ H^s_N(\overline{X},\overline{f} )$ is
 naturally isomorphic to the homology of the
 complex $(D^s_{\mathcal{Q}}(\Sigma_*(\overline{\rho})), d^s(\overline{\rho}))$. Similarly,
 the homology $ H^u_N(\overline{X},\overline{f} )$
  is naturally isomorphic to the homology of the
 complex $(D^u_{\A}(\Sigma_*(\overline{\rho})), d^{u*}(\rho))$.
\item The only non-zero terms in the
complexes $(D^s_{\mathcal{Q}}(\Sigma_*(\overline{\rho})), d^s(\overline{\rho}))$
\newline
and
$(D^u_{\A}(\Sigma_*(\overline{\rho})), d^{u*}(\overline{\rho}))$ occur
in entries $N = 0$ and $N = 1$.
\item
For $N \neq 0, 1$, we have
\[
H^s_N(\overline{X},\overline{f} ) =
H^u_N(\overline{X},\overline{f} ) = 0.
\]
\item
We have
\[
\begin{array}{rcl}
H^s_0(\overline{X},\overline{f} ) & = &
coker \left( d^s(\overline{\rho})_{1}:
D^s_{\mathcal{Q}}(\Sigma_{1}(\overline{\rho}))
\rightarrow D^s_{\mathcal{Q}}(\Sigma_{0}(\rho)) \right) \\
H^s_1(\overline{X},\overline{f} ) & = &
ker \left( d^s(\overline{\rho})_{1}:
 D^s_{\mathcal{Q}}(\Sigma_{1}(\overline{\rho}))
\rightarrow D^s_{\mathcal{Q}}(\Sigma_{0}(\overline{\rho})) \right) \\
H^u_0(\overline{X},\overline{f} ) & = &
ker \left( d^{u*}(\overline{\rho})_{1}:
D^s_\A(\Sigma_{0}(\overline{\rho}))
\rightarrow D^s_\A(\Sigma_{1}(\overline{\rho})) \right) \\
H^u_1(\overline{X},\overline{f} ) & = &
coker \left( d^{u*}(\overline{\rho})_{1}:
 D^s_\A(\Sigma_{0}(\overline{\rho}))
\rightarrow D^s_\A(\Sigma_{1}(\overline{\rho})) \right)
\end{array}
\]
\end{enumerate}
\end{theorem}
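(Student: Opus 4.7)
The plan is to invoke the general Smale-space homology machinery developed in \cite{P1}, exploiting the two features that distinguish our setting from the general case: first, that $\overline{\rho}:\Sigma_X\to\overline{X}$ is an $s$-bijective factor map from a shift of finite type (Theorem \ref{2-12}(4)), and second, that $\overline{\rho}$ is at most two-to-one (Theorem \ref{2-12}(3)).

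Part (1) follows directly from Theorem 4.2.12 of \cite{P1}, which asserts that for any Smale space receiving an $s$-bijective factor map from a shift of finite type, the double complex computing $H^s_\ast$ collapses to a single complex built from the iterated fibered products
\[
\Sigma_N(\overline{\rho})=\{(y_0,\ldots,y_N)\in\Sigma_X^{N+1}:\overline{\rho}(y_i)=\overline{\rho}(y_j)\text{ for all }i,j\},
\]
with coefficients in the Krieger-type dimension groups $D^s_{\mathcal{Q}}$; Theorem 7.2.1 of \cite{P1} supplies the analogous statement in the dual/unstable direction, using $D^u_{\mathcal{A}}$. This delivers the isomorphisms asserted in part (1) with no further work.

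For part (2), I would use the at-most-two-to-one property: for $N\ge 2$ every tuple in $\Sigma_N(\overline{\rho})$ necessarily has a repeated coordinate, because the common image has at most two preimages. The $\mathcal{Q}$ and $\mathcal{A}$ functors are defined by an alternating/antisymmetric quotient construction, and tuples fixed by a transposition are therefore sent to zero. Hence $D^s_{\mathcal{Q}}(\Sigma_N(\overline{\rho}))=D^u_{\mathcal{A}}(\Sigma_N(\overline{\rho}))=0$ for all $N\ge 2$, so the complex is concentrated in degrees $0$ and $1$. Parts (3) and (4) are then formal: with only the single differential $d^s(\overline{\rho})_1$ (respectively $d^{u*}(\overline{\rho})_1$) remaining, the degrees $\neq 0,1$ give vanishing homology, while the homology in degrees $0$ and $1$ is read off as the cokernel and kernel of this map, with the roles interchanged in the unstable case since $d^{u*}(\overline{\rho})_1$ points in the opposite direction.

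The main obstacle, and really the only one, is a bookkeeping verification: one must check that the conventions in \cite{P1} for the fibered products $\Sigma_N(\overline{\rho})$, for the $\mathcal{Q}$ and $\mathcal{A}$ functors, and for the direction and indexing of the differentials are consistent with the formulation given above, and in particular that the antisymmetric quotient genuinely annihilates degenerate tuples rather than merely mapping them to torsion. Once these conventions are pinned down, each of the four assertions reduces to a direct citation combined with the at-most-two-to-one input supplied by Theorem \ref{2-12}.
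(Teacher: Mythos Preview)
Your proposal is correct and mirrors the paper's treatment: the paper does not give an independent proof of this theorem but simply invokes Theorem~4.2.12 and Theorem~7.2.1 of \cite{P1}, noting exactly the two features you identify (an $s$-bijective cover by a shift of finite type collapses the double complex to a single complex, and the at-most-two-to-one property of $\overline{\rho}$ forces vanishing in degrees $N\geq 2$). Your slightly more explicit explanation of why repeated coordinates kill the $\mathcal{Q}$ and $\mathcal{A}$ groups, together with your caveat about checking conventions in \cite{P1}, is entirely in the spirit of the paper's discussion preceding the statement.
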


Our first task is to identify the two groups
involved in the two complexes.
It is a general fact that the groups in position
$0$ are the simplest. We have
\[
\begin{array}{ccccc}
D^s_{\mathcal{Q}}(\Sigma_{0}(\overline{\rho})) & = &
 D^{s}(\Sigma_{X}, \sigma) & = & D^{s}(G^{L}), \\
D^u_\A(\Sigma_{0}(\overline{\rho})) &  = &
D^{u}(\Sigma_{X}, \sigma) & = & D^{u}(G^{L}),
\end{array}
\]
for any $L \geq 1$.

Next, we turn to the groups in position 1 of our complexes.
 Notice that
we write $e_{a,1}^{L}$ for
 $(e_{a,1}, e_{a,1}, \ldots, e_{a,1})$ and $e^{L}_{b,j(b)}$ for
$(e_{b,j(b)}, e_{b,j(b)}, \ldots ,e_{b,j(b)})$ in
$G^{L}$. for $L \geq 1$.

\begin{lemma}
\label{3-8}
Let $L$ be any integer such that
\[
\overline{\rho}: (\Sigma_{G_{L}}, \sigma)
\rightarrow (\overline{X}, \overline{f})
\]
 is  regular (see \cite{P1}).
\begin{enumerate}
\item
Then $D^s_{\mathcal{Q}}(\Sigma_{1}(\overline{\rho})) =  D^{s}_Q(G^{L}_{1})$
 is
an infinite cyclic group with generator
$Q[(e_{a,1}^{L-1}, e_{b, j(b)}^{L-1}), 1]
=  Q[(e_{a,1}^{L-1}, e_{b, j(b)}^{L-1}), n]$,
for all $n \geq 1$.
\item
Then $D^u_\mathcal{A}(\Sigma_{1}(\overline{\rho})) =  D^{u}_{\mathcal{A}}(G^{L}_{1})$ is
an infinite cyclic group with generator
$[(e_{a,1}^{L-1}, e_{b,j(b)}^{L-1}) - (e_{b,j(b)}^{L-1}, e_{a,1}^{L-1}), 1]  =  [(e_{a,1}^{L-1}, e_{b,j(b)}^{L-1}) - (e_{b,j(b)}^{L-1}, e_{a,1}^{L-1}), n]$, for all $n \geq 1$.
\end{enumerate}
\end{lemma}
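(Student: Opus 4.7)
The plan is to use the explicit description of the non-trivial fibres of $\overline{\rho}$ provided by Theorem \ref{2-12} to pin down $\Sigma_{1}(\overline{\rho})$ as a shift of finite type whose vertex set, in a sufficiently fine higher-block presentation, reduces essentially to two points.

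First, fix $L \geq 1$ so that $\overline{\rho}:\Sigma_{G^L}\to(\overline{X},\overline{f})$ is regular in the sense of \cite{P1}. In this presentation the graph $G^L_1$ has as vertices the ordered pairs $(v,v')$ of distinct length-$(L-1)$ paths in $G$ that appear as some window $(I_{[n,n+L-2]},J_{[n,n+L-2]})$ of a pair $(I,J)\in\Sigma_{1}(\overline{\rho})$. By Theorem \ref{2-12}, the only such pairs with $I_n\neq J_n$ for every $n$ are the constant pair $(e_{a,1}^{\mathbb{Z}},e_{b,j(b)}^{\mathbb{Z}})$ and its swap; every other pair satisfies $I_n=J_n$ for $n<N$ and is equal to a constant pair for $n\geq N+2$. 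Consequently, $G^L_1$ has two distinguished vertices $(e_{a,1}^{L-1},e_{b,j(b)}^{L-1})$ and $(e_{b,j(b)}^{L-1},e_{a,1}^{L-1})$, each carrying a unique self-loop (the length-$L$ paths $(e_{a,1}^L,e_{b,j(b)}^L)$ and its swap, which exist because $f(e_{a,1})=e_a$ and $f(e_{b,j(b)})=e_b$), together with finitely many transient vertices concentrated around a transition index $N$ of a case (ii) fibre.

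Next I would evaluate the inductive limit $D^s(G^L_1)$. The self-loop calculation gives $\gamma^{s}_{G^L_1}[(e_{a,1}^{L-1},e_{b,j(b)}^{L-1}),n]=[(e_{a,1}^{L-1},e_{b,j(b)}^{L-1}),n]$, and similarly for the swap, so these classes are independent of $n$. Each transient vertex $w$ has only finitely many preimages under $\gamma^{s}_{G^L_1}$ and, after finitely many applications, is carried into the $\mathbb{Z}$-span of the two self-looped vertices; hence its class in the inductive limit lies in that span. This yields $D^s(G^L_1)\cong\mathbb{Z}^2$, freely generated by $[(e_{a,1}^{L-1},e_{b,j(b)}^{L-1}),1]$ and $[(e_{b,j(b)}^{L-1},e_{a,1}^{L-1}),1]$. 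To pass to $D^s_{\mathcal{Q}}$ one quotients by the $S_2$-action that swaps the two coordinates of $\Sigma_{1}(\overline{\rho})$, identifying the two classes and producing $\mathbb{Z}$ with generator $Q[(e_{a,1}^{L-1},e_{b,j(b)}^{L-1}),1]$. To pass to $D^u_{\mathcal{A}}$ one antisymmetrizes instead, producing $\mathbb{Z}$ with generator $[(e_{a,1}^{L-1},e_{b,j(b)}^{L-1})-(e_{b,j(b)}^{L-1},e_{a,1}^{L-1}),1]$.

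The main obstacle will be careful verification that the transient vertices contribute nothing to the inductive-limit class beyond the two distinguished ones. This amounts to enumerating the transient vertex types (finitely many, parametrised by the index $(i,j)$ with $I_N\neq J_N$ and by the choice of element of $\mathcal{A}_0$ in which $I_{N+1}$ and $J_{N+1}$ lie, as in Lemma \ref{2-11}) and tracking how $\gamma^{s}_{G^L_1}$ (and dually $\gamma^{u}_{G^L_1}$ for the $\mathcal{A}$-case) acts on them. Once one observes that iterating drives every transient window into the stable regime, the inductive-limit structure collapses as asserted.
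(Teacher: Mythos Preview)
Your overall strategy---use Theorem \ref{2-12} to pin down $G^{L}_{1}$ and then compute the inductive limits---is the same as the paper's, but there are two genuine errors in the execution.

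First, $\Sigma_{1}(\overline{\rho})$ and hence $G^{L}_{1}$ contain the full diagonal $\{(I,I)\}$, which you exclude by restricting to ``ordered pairs of distinct paths''. The diagonal is a copy of $G^{L}$ itself and contributes a copy of $D^{s}(\Sigma_{X},\sigma)$ to $D^{s}(G^{L}_{1})$, so the assertion $D^{s}(G^{L}_{1})\cong\mathbb{Z}^{2}$ is false. Diagonal elements vanish only \emph{after} applying the $\mathcal{Q}$-functor; this is why the paper works directly in $\mathcal{Q}(\mathbb{Z} G^{L-1}_{1},S_{2})$ rather than computing $D^{s}$ first and then quotienting.

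Second, and more seriously, you have the direction of flow under $\gamma^{s}$ reversed. The map $\gamma^{s}=i\circ t^{*}$ extends a window one step to the \emph{left}. For a transient window straddling the transition index $N$ of a case-(c) pair in Theorem \ref{2-12}, iterating $\gamma^{s}$ pushes the window into the region $n<N$ where $I_{n}=J_{n}$; thus $(\gamma^{s})^{k}(w)$ becomes a sum of \emph{diagonal} elements, not elements in the span of the two constant vertices. These diagonal sums are zero in $\mathcal{Q}$, which is exactly how the paper kills them. Conversely, the constant vertex $(e_{a,1}^{L-1},e_{b,j(b)}^{L-1})$ has incoming edges other than its self-loop (arising from the $\mathcal{A}_{0}$-stage of Lemma \ref{2-11}), so $\gamma^{s}$ of it equals itself \emph{plus} transient terms; the equality $Q[(e_{a,1}^{L-1},e_{b,j(b)}^{L-1}),n]=Q[(e_{a,1}^{L-1},e_{b,j(b)}^{L-1}),1]$ holds only because those extra terms die in $\mathcal{Q}$, not because the self-loop is the only edge.

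For part (2) the situation is genuinely different: $D^{u}_{\mathcal{A}}$ is an inductive limit under $\gamma^{u}=t\circ i^{*}$, which extends windows to the \emph{right}, and there transient windows really do flow to the constant pair. Your intuition is correct for $\gamma^{u}$, but you have applied it to $\gamma^{s}$; and you cannot derive both parts from a single ``$D^{s}(G^{L}_{1})\cong\mathbb{Z}^{2}$'' computation, since the two parts sit in different inductive systems.
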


\begin{proof}
Theorem \ref{2-12} gives us a complete description
of the elements of $G^{L}_{1}$; they
are simply restrictions of the infinite paths
$I,J$ described there to intervals of length $L$.

Let $(I,J)$ be in $G^{L}_{1}$. If $I = J$, then
it is the zero element of
$D_{\mathcal{Q}}^{s}(G^{L}_{1})$. If $I \neq J$,
 then from the fact $(I,J)$ and $(J,I)$ represent
 the
same element of
$\mathcal{Q}(\mathbb{Z} G^{L}_{1}, S_{2})$, which is the quotient $\mathbb{Z} G^{L}_{1}$ when considering
the action by the permutation group $S_{2}$,
we can assume that $I_{L}=e_{a,1}$ and
$J_{L} = e_{b,j(b)}$.

First consider the case $(I,J) \neq (e_{a,1}^{L}, e_{b,j(b)}^{L})$. This implies that
$(I_{1},J_{1}) \neq (e_{a,1}, e_{b,j(b)})$.
It follows from
Theorem \ref{2-12} that if $(I',J')$ is in
$G^{2L+1}_{1}$ with $t^{L+1}(I',J')= (I,J)$, then
the first $L$ entries of $I'$ and $J'$ are equal.
This means that
\[
(\gamma^{s})^{L+1}(I,J) = i^{L+1} \circ (t^{L+1})^{*}(I,J) = 0 \in \mathcal{Q}(\mathbb{Z} G^{L}_{1}, S_{2}).
\]
This in turn implies
$Q[(I,J),n] = 0$ in  $D_{\mathcal{Q}}^{s}(G^{L}_{1})$, for any positive integer $n$.

Finally, we consider $(I,J)= (e_{a,1}^{L}, e_{b,j(b)}^{L})$.
It follows that $i \circ t^{*}(I,J)$ is the sum
of $(e_{a,1}^{L}, e_{b,j(b)}^{L})$ and other terms, all of
the type considered above.This means that
$\gamma^{s}(e_{a,1}^{L}, e_{b,j(b)}^{L}) - (e_{a,1}^{L}, e_{b,j(b)}^{L})$ is zero in the limit under $\gamma^{s}$.
We conclude that
$Q[(e_{a,1}^{L}, e_{b,j(b)}^{L}), 1] = Q[(e_{a,1}^{L}, e_{b,j(b)}^{L}), n] \neq 0$,
for any positive integer $n$ and is a generator for
$D_{\mathcal{Q}}^{s}(G^{L}_{1})$.

For the other case, the group
$\mathcal{A}(\mathbb{Z} G^{L}_{1}, S_{2})$ is generated
by elements of the form $(I,J) - (J,I)$, where
$I \neq J$. It follows immediately from Theorem
\ref{2-12} that for any such $(I,J)$, if $(I',J')$
is in $G^{2L+1}_{1}$ and satisfies
$i^{L+1}(I',J')$, then the
last $L$ entries of $(I',J')$ are $(e_{a,1}^{L}, e_{b,j(b)}^{L})$ or $( e_{b,j(b)}^{L}, e_{a,1}^{L})$. This means that
$(\gamma^{u})^{L+1}(I,J) = t^{L+1} \circ (i^{L+1})^{*}(I,J)$ is a multiple of
$(e_{a,1}^{L}, e_{b,j(b)}^{L})$ or $( e_{b,j(b)}^{L}, e_{a,1}^{L})$. We also note that
if $(I,J) = (e_{a,1}^{L}, e_{b,j(b)}^{L})$ then the $(I',J')$
above is unique and we deduce
$\gamma^{u}(e_{a,1}^{L}, e_{b,j(b)}^{L}) = (e_{a,1}^{L}, e_{b,j(b)}^{L})$. The conclusion follows.
\end{proof}

Having identified the groups involved, our next task
is to compute the boundary maps between them. The first
step is the following technical lemma.

\begin{lemma}
\label{3-9}
The number  $K=1$  satisfies Lemma 2.7.1 of \cite{P1}
 for
 $ \overline{\rho}:(\Sigma_X,\sigma)\rightarrow (\overline{X},\overline{f})$.
\end{lemma}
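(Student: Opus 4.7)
The plan is to verify the hypothesis of Lemma 2.7.1 of \cite{P1} with $K=1$ by exploiting the very tight description of the fibres of $\overline{\rho}$ provided by Theorem \ref{2-12}. Recall that this lemma requires that the structure of the equivalence relation on $\Sigma_X$ induced by $\overline{\rho}$ be controlled by a fixed window size $K$: in essence, two points with the same image should either coincide throughout, or else be separated already at the level of a single coordinate chosen appropriately relative to the shift. Since $\overline{\rho}$ is at most two-to-one, all that must be checked is that each of the three possible forms of a coincident pair $(I,J)$ listed in Theorem \ref{2-12} can be detected in a one-step window.

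The first step would be to dispose of case (b) of Theorem \ref{2-12}(1), where $I_n = e_{a,1}$ and $J_n = e_{b,j(b)}$ for every $n \in \mathbb{Z}$. Here $I$ and $J$ differ in every single coordinate, so any single letter already distinguishes them, and the one-step condition is trivial.

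The second step handles case (c), where $I_n = J_n$ for $n < N$, $I_N \neq J_N$, and $I_n = e_{a,1}$, $J_n = e_{b,j(b)}$ for all $n \geq N+2$. The decisive observation is that $N$ is intrinsically characterised by $\overline{\rho} \circ \sigma^{N}(I) \in f^{-1}\{p\} - \{p\}$; moreover, by part (1)(d) of Lemma \ref{2-11}, the entries $I_{N+1}$ and $J_{N+1}$ already lie in $\mathcal{A}_{0}$ and map into $e_{a,1}$ and $e_{b,j(b)}$ respectively under $f$. Thus the branch at position $N$ is already determined by a single coordinate ahead, which is exactly what is required for $K=1$.

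The main obstacle, and the part that will demand the most care, is matching the local formulation in Lemma 2.7.1 of \cite{P1} (which is stated for general $s$-bijective factor maps between Smale spaces) to the concrete combinatorial data of $\overline{\rho}$. Once one writes out precisely what ``$K=1$'' means for $\overline{\rho}$ in terms of the graphs $G_X$ and the fibered products $\Sigma_{N}(\overline{\rho})$, the verification reduces to the enumeration already carried out in Theorem \ref{2-12}: every non-trivial fibre pair is of one of the two types above, and in each type the discrepancy is visible within a single coordinate. I would therefore expect the proof to consist essentially of quoting Theorem \ref{2-12} and remarking that the characterisations (b) and (c) are both ``one-coordinate'' conditions, which suffices.
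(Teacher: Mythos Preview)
Your proposal has a genuine gap: you have misidentified what Lemma 2.7.1 of \cite{P1} actually asks. It is \emph{not} the statement that a pair $(I,J)$ with $\overline{\rho}(I)=\overline{\rho}(J)$ can be distinguished by looking at a single coordinate. The condition to be verified (with $K=1$) involves \emph{four} points: given $I,J,I',J'\in\Sigma_X$ with $\overline{\rho}(I)=\overline{\rho}(J)$, $\overline{\rho}(I')=\overline{\rho}(J')$, $I_n=I'_n$ for all $n\geq 0$, and $J$ stably equivalent to $J'$, one must show that $J_n=J'_n$ for all $n\geq 1$. In other words, once the ``first'' partner $I$ is pinned down on the non-negative tail, the ``second'' partner $J$ is determined on $[1,\infty)$ within its stable class. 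Your checks that $I$ and $J$ differ in coordinate $N$ (case (c)) or in every coordinate (case (b)) do not address this; what is needed is a rigidity statement comparing $J$ with $J'$.

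The paper's argument does use Theorem \ref{2-12}, so your instinct to invoke it is correct, but the case analysis is carried out on the pair of pairs. The key observations missing from your sketch are: first, $I_n=I'_n$ for $n\geq 0$ forces $\rho(I)=\rho(I')$, hence $\rho(J)=\rho(J')$; second, in case (c) the integer $N$ is characterised by $\rho(\sigma^N(I))\in f^{-1}\{p\}\setminus\{p\}$, which depends only on $\rho(I)$, so $(I',J')$ is forced to be of the same type with the same $N$ when $N\geq 0$, and then Lemma \ref{2-11} gives the uniqueness of the partner entries $J_n$ for $n\geq N$; third, when $N<0$ or in case (b), one has $J_n=e_{b,j(b)}$ for all $n\geq 1$, and one must argue that $(I',J')$ cannot be of type (c) with $N'\geq 0$, so $J'_n=e_{b,j(b)}$ as well. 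None of this is difficult once Theorem \ref{2-12} is in hand, but it is a different computation from the one you outlined.
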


\begin{proof}
Let $I,J, I', J'$ be in $\Sigma_{X}$
and satisfy $\overline{\rho}(I)=\overline{\rho}(J)$, $\overline{\rho}(I')=\overline{\rho}(J')$, $I_{n}=I'_{n}$,
 for all $n \geq 0$, and $J, J'$ are stably equivalent.
We need to prove that $J_{n}=J'_{n}$, for
all $n \geq 1$.

First observe that  $I_{n}=I'_{n}$, for all $n \geq 0$
implies that $\rho(I) = \rho(I')$ and hence we
have $\rho(J) = \rho(I)  = \rho(I') = \rho(J')$.

We proceed by considering the three cases given in the
first statement of Theorem \ref{2-12}. If $I=J$, then since
$I'$ and $J'$ are stably equivalent, we
must have $I'=J'$ also. If both $I,J$ and $I', J'$
satisfy the second condition, then $I=I'$ and
$J=J'$ and the conclusion holds.

 We consider the case that $I,J$
satisfies the third condition. Let $N$
 be the unique integer given
in the condition for $I,J$. First suppose that  $N \geq 0$. Then
$f^{N}(\rho(I)) =\rho(\sigma^{N}(I)) \in f^{-1}\{ p \} - \{ p \}$.
It follows then that $f^{N}(\rho(I'))  \in f^{-1}\{ p \} - \{ p \}$.
From this fact, it follows that the pair
$I', J'$ is also of the third type and the unique
$N'$ from that condition is equal to $N$.
We apply Lemma \ref{2-11} to the pairs $\sigma^{N}(I),
\sigma^{N}(J)$ and  $\sigma^{N}(I'),
\sigma^{N}(J')$.
Then $J_{n}=J'_{n}$ for all $n \geq N$
follows from the uniqueness statement in
the conclusion of Lemma \ref{2-11}.
On the other hand, for $ 0 \leq n < N$, we have
$J_{n} = I_{n} = I'_{n} = J'_{n}$ and we are done.

Next, we consider the case $N< 0$. Here we have
$J_{n} = e_{b, j(b)}$, for all $n \geq 1$.
 If $I',J'$ is also
of third type with $N' \geq 0$, the the same argument
above, reversing the roles of $I,J$ and $I',J'$
would imply $N = N' \geq  0$ which is a contradiction.
So either $I',J'$ is of the third type with
$N'< 0$ or it is of the second type. In
either case, we have $J'_{n} = e_{b,j(b)}$
for all $n \geq 1$
and we are done.
\end{proof}

We are now ready to compute the boundary map for
the first complex.

\begin{lemma}
\label{3-10}
Let $L$ be any integer such that
\[
\overline{\rho}: (\Sigma_{G_{L}}, \sigma)
\rightarrow (\overline{X}, \overline{f})
\]
 is  regular (see \cite{P1}).
Then we have
\[
d^{s,1}_{\mathcal{Q}}(\overline{\rho})_{1}(Q[(e_{a,1}^{L}, e_{b,j(b)}^{L}), 1])  = 2[w, 1].
\]
(Recall that $w=0$ if and only if $(X,f)$
is orientable.)
\end{lemma}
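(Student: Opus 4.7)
The plan is to compute $d^s(\overline{\rho})_1(Q[(e_{a,1}^L, e_{b,j(b)}^L), 1])$ by directly unfolding Putnam's construction of the $\mathcal{Q}$-boundary and then identifying the resulting class in $D^s(\Sigma_X, \sigma)$ with $2[w, 1]$. The first concrete task is to unpack the boundary map: for the $s$-bijective factor map $\overline{\rho}$, the space $\Sigma_1(\overline{\rho})$ consists of pairs $(I, J)$ with $\overline{\rho}(I) = \overline{\rho}(J)$, and $d^s(\overline{\rho})_1$ is built from the two face maps $\delta_0, \delta_1: \Sigma_1(\overline{\rho}) \to \Sigma_X$ assembled in the way required to descend to the $\mathcal{Q}$-quotient (coinvariants under the swap $(I, J) \leftrightarrow (J, I)$) as in \cite{P1}. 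Theorem \ref{2-12} gives a complete description of which pairs can arise at the representative level, so I would plug our generator into the resulting formula.

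Having applied the boundary formula, I would transport the output through the canonical isomorphism $D^s(G^L) \cong D^s(\Sigma_X, \sigma)$ induced by iterated application of the initial-vertex map $i$ and the relation $\gamma^s_G = i \circ t^*$, arriving at an explicit element of the $\gamma^s_G$-inductive limit based on $\mathbb{Z}G^0 = \mathbb{Z}E$. The final step is to identify this element with $2[w, 1]$. Here the argument should parallel the proof of Lemma \ref{2-9}, in which $\gamma^s_G(w) = w$ was verified by analysing $a/b$-signature sequences at the vertex $p$: running the same combinatorial accounting on the boundary output, each vertex $e_i \in E$ contributes $+2$ when $e_i \in E_a$, $-2$ when $e_i \in E_b$, and $0$ when $e_i \in E_0$. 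Summing over vertices yields $2\,\mathrm{Sum}(E_a) - 2\,\mathrm{Sum}(E_b) = 2w$.

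The main obstacle is this last identification: the endpoint contributions at each vertex have to be matched against $2w$ in the inductive limit, and this matching emerges only after re-running Lemma \ref{2-9}'s signature analysis starting from the specific boundary image rather than from $w$ itself. The factor of $2$ comes from the two ends of each edge $e_i$ near $p$; the opposite signs for $E_a$ versus $E_b$ come from the fact that $e_{a,1}$ and $e_{b,j(b)}$ sit at opposite ends (initial versus terminal subinterval) in the local picture at $p$, so an edge in $E_a$ contributes the same sign at both ends while an edge in $E_b$ contributes the opposite sign. As a built-in sanity check, in the orientable case $E_a \cup E_b = \emptyset$, hence $w = 0$, and the formula collapses to $0$; this is exactly what is required for the values $H^s_1(\overline{X}, \overline{f}) = \mathbb{Z}$ and $H^s_0(\overline{X}, \overline{f}) = D^s(\Sigma_X, \sigma)$ stated in Theorem \ref{3-1} for orientable one-solenoids, giving independent confirmation that the boundary has been correctly identified.
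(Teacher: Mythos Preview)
Your outline is correct and follows the paper's approach: compute the two face maps $\delta_0,\delta_1$ on the generator, push down to $D^s(G^0)$ via iterated $i$, and read off the coefficient of each $e_i$ according to whether $e_i\in E_a$, $E_b$, or $E_0$.

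Two refinements against the paper's proof. First, the paper explicitly invokes Lemma~\ref{3-9} (that $K=1$ suffices) to justify computing $\delta_0^{s,1},\delta_1^{s,1}$ at the level of length-$L$ paths; you should cite this rather than leave it implicit. Second, your plan to ``re-run Lemma~\ref{2-9}'s signature analysis'' overshoots: the paper does not revisit the $\{a,b\}^{2j(i)}$ sequences at all. Instead it uses Lemma~\ref{2-11} directly to list the $(I,J)\in G^L_1$ with $t(I,J)=(e_{a,1}^{L-1},e_{b,j(b)}^{L-1})$, and the count of $I_1\in\mathcal{A}_0$ with $f(I_1\cap U_p)\subset e_{a,1}$ is simply $2,1,0$ for $e_i$ in $E_a,E_0,E_b$ respectively (and symmetrically $0,1,2$ on the $\delta_1$ side), by the very definition of these sets. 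No further combinatorics is needed. Finally, the paper also isolates a small family of ``extra'' pairs, namely those with $I_0=e_{i,j}$, $J_0=e_{i,j+1}$ and $I_1=e_{a,1}$, $J_1=e_{b,j(b)}$ already; these contribute $e_i$ to \emph{both} $i^L\circ\delta_0$ and $i^L\circ\delta_1$ and hence cancel in the difference. Your outline does not mention this cancellation, and without it the bookkeeping is incomplete.
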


\begin{proof}
 In view of Lemma \ref{3-9}, we may
use $K=1$ to compute $\delta_{0}^{s,1}$ and $\delta_{1}^{s,1}$
from $D^{s}_{\mathcal{Q}}( G^{L}_{1}, S_{2})$ to
$D^{s}( G^{L+1})$. The former group is cyclic and is
generated by  $Q[(e_{a,1}^{L-1}, e_{b,j(b)}^{L-1}),1]$. Moreover, the
the map $i^{L+1}$ induces an isomorphism between
$D^{s}( G)$. It suffices to prove that
\[
i^{L}\circ \delta_{0}(e_{a,1}^{L-1}, e_{b,j(b)}^{L-1})
- i^{L}\circ \delta_{1}(e_{a,1}^{L-1}, e_{b,j(b)}^{L-1}) = 2 w.
\]
and we are done.

We must find all $I$ in $G^{L}$ such that there exists
$J$ with $(I,J)$ in $G^{L}_{1}$ and
$t(I,J) = (e_{a,1}^{L-1}, e_{b,j(b)}^{L-1})$. An almost
 complete
description is given by Lemma \ref{2-11}: this is
$I = ( I_{1}, \ldots, I_{L})$ with
 $I_{1}$ in $\mathcal{A}_{0}$ with
$f(I_{1} \cap U_{p}) \subset e_{a,1}$.
If $e_{i}$ is in $E_{a}$, then
both   $ ( e_{i,1}, e_{a,1}, \ldots, e_{a,1})$
and $ ( e_{i,j(i)}, e_{a,1}, \ldots, e_{a,1})$
appear in this sum. After applying $i^{L}$, we obtain
$2e_{i}$. If $e_{i}$ is in $E_{0}$, then one of these
two appears in the sum and the other does not.
After applying $i^{L}$ we get $e_{i}$. Finally, if
$e_{i}$ is in $E_{b}$, then neither appears in the list
and $e_{i}$ does not appear in the sum after applying $i^{L}$.

The reason this is not quite all, is that in Lemma
\ref{2-11}, it is possible that $I_{1} = e_{a,1}$
and $J_{1} = e_{b,j(b)}$. Let us assume
in such a case that $f(e_{i,j} \cap U_{i,j}) \subset e_{a,1}$
and $f(e_{i,j+1} \cap U_{i,j}) \subset e_{b,j(b)}$ for the other
case is similar.
Then we have $(e_{i,j}e_{a,1}^{L-1}, e_{i,j+1}e_{b,j(b)}^{L-1})$.
Applying $i^{L} \circ \delta_{0}$
 to such an element gives
 \[
 i^{L}(  e_{i,j}e_{a,1}^{L-1}) = i(e_{i,j}) = e_{i}.
 \]

Now let us compute
$i^{L}\circ \delta_{1}(e_{a,1}^{L-1}, e_{b,j(b)}^{L-1}) $.
First consider the extra elements we found at the end
of the last paragraph. Here we obtain the
element
\[
i^{L}(  e_{i,j+1}e_{a,1}^{L-1}) = i(e_{i,j+1}) = e_{i}.
 \]
 When we take the difference, these terms cancel.
 By an argument exactly analogous
 to the one above, what we are left with in our computation of
 $i^{L}\circ \delta_{1}(e_{a,1}^{L-1}, e_{b,j(b)}^{L-1}) $
 is that each element of $E_{a}$ does not appear, each
 element of $E_{0}$ appears with coefficient $1$ and each
 element of $E_{b}$ appears with coefficient $2$. Taking the difference we get
 \[
 2 Sum(E_{a}) - 2 Sum(E_{b}) = 2w,
 \]
 as claimed.
\end{proof}

We move on to the other boundary map.

\begin{lemma}
\label{3-11}
Let $L$ be any integer such that
\[
\overline{\rho}: (\Sigma_{G_{L}}, \sigma)
\rightarrow (\overline{X}, \overline{f})
\]
 is  regular (see \cite{P1}).
Then we have
\[
d^{u*,1}_{\mathcal{A}}(\overline{\rho})_{0}(a) = 2w^{*}(a)
 [(e_{a,1}^{L-1}, e_{b,j(b)}^{L-1}) - (e_{b,j(b)}^{L}, e_{a,1}^{L}), 1],
\]
for all $  a \in D^{u}(G)$.
\end{lemma}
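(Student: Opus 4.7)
The plan is to mirror the strategy of Lemma \ref{3-10}, working dually on the unstable side with the antisymmetric $S_{2}$-action in place of the symmetric one. By Lemma \ref{3-9} I may take $K=1$ in the definition of the complex $(D^{u}_{\mathcal{A}}(\Sigma_{\ast}(\overline{\rho})), d^{u*}(\overline{\rho}))$, so that $d^{u*,1}_{\mathcal{A}}(\overline{\rho})_{0}$ is computed at finite level $L$ as the alternating sum $\delta_{0}^{u*} - \delta_{1}^{u*}$ followed by projection onto $\mathcal{A}(\mathbb{Z} G^{L}_{1}, S_{2})$. On a basis element $a \in G^{L}$, the image $\delta_{i}^{u*}(a)$ is the formal sum of those pairs in $G^{L}_{1}$ carrying $a$ in the $i$-th coordinate, and after passing to the antisymmetric quotient the two sums combine so that
\[
d^{u*,1}_{\mathcal{A}}(a) \equiv 2 \sum_{J:\,(a,J) \in G^{L}_{1}} (a,J) \pmod{S_{2}}.
\]

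Now I apply the classification of identified pairs from Theorem \ref{2-12}: every non-diagonal pair in $G^{L}_{1}$ has, up to swap, $a_{n} = e_{a,1}$ and $J_{n} = e_{b,j(b)}$ for every $n \geq 2$. Consequently each non-zero summand above is a multiple of the generator $(e_{a,1}^{L-1}, e_{b,j(b)}^{L-1}) - (e_{b,j(b)}^{L-1}, e_{a,1}^{L-1})$ identified in Lemma \ref{3-8}(2). It remains only to extract the coefficient.

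Since both sides of the claimed identity are compatible with the stabilization map $\gamma^{u}$ (the generator of the target is fixed by $\gamma^{u}$ by Lemma \ref{3-8}(2), and $w^{*}$ descends to $D^{u}$ by Lemma \ref{2-10}(2)), it suffices to evaluate the coefficient for $a$ a single edge $e_{i} \in \mathcal{A}$. Using the precise description of identified pairs in Lemma \ref{2-11}, the two sub-edges $e_{i,1}$ and $e_{i,j(i)}$ of $e_{i}$ each give a candidate pair in $G^{L}_{1}$. A case analysis shows that if $e_{i} \in E_{a}$ both contributions land on the side whose first coordinate maps to $e_{a,1}$ and hence contribute with the same sign (total $+2$), if $e_{i} \in E_{b}$ both land on the opposite side and contribute with the opposite sign (total $-2$), and if $e_{i} \in E_{0}$ one lands on each side so the contributions cancel (total $0$). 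This is exactly $2 w^{*}(e_{i})$, proving the lemma.

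The main obstacle, as was already the case in Lemma \ref{3-10}, is the careful bookkeeping of the interior-vertex exceptional pairs $(I_{1}, J_{1}) = (e_{i,j}, e_{i,j+1})$ appearing in Lemma \ref{2-11}(2): these contribute to both $\delta_{0}^{u*}(a)$ and $\delta_{1}^{u*}(a)$, and the required cancellations must be verified with the correct signs under the $S_{2}$-quotient in order for the residual coefficient to come out as exactly $2w^{*}(a)$ rather than something larger. Once this sign-tracking is handled, the formula follows just as in the symmetric case.
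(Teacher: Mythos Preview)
Your approach is essentially the paper's: invoke $K=1$ from Lemma \ref{3-9}, pass to the antisymmetric quotient to get the doubled sum over pairs, and then run the $E_{a}/E_{b}/E_{0}$ case split on a generator $e_{i}$ (note $e_{i}\in G^{0}=E$, not $\mathcal{A}$). The ``obstacle'' you flag in your last paragraph is precisely what the paper dispatches up front: when you lift $e_{i}$ to $i^{L*}(e_{i})$ and sum, any pair $(I,J)\in G^{L}_{1}$ with $i(I_{1})=i(J_{1})=e_{i}$---in particular all the interior-vertex pairs $(e_{i,j},e_{i,j+1})$ from Lemma \ref{2-11}---gets counted once with $I$ in the first slot and once with $J$ in the first slot, and these cancel in $\mathcal{A}(\mathbb{Z}G^{L}_{1},S_{2})$; what survives is exactly the boundary contribution from $e_{i,1}$ and $e_{i,j(i)}$ that your case analysis already handles correctly.
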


\begin{proof}
We first consider the case that
 $e_{i}$ is in $E_{a}$. Recall that the canonical
isomorphism from $D^{s}(G)$ to $D^{s}(G^{L+1})$ is induced
by the map $i^{L*}$ which sends $e_{i}$ to the sum
of all paths $I=(I_{1}, \ldots, I_{L})$ with
$i(I_{1}) = e_{i}$.
To this element we wish to apply
$d^{u*,1}_{\mathcal{A}}(\overline{\rho})_{0}(a)$. Let $B$
denote the set of all pairs $(I,J)$ in
$G^{L}_{1}$ with $I$ fixed as above so that
\[
d^{u*,1}_{\mathcal{A}}(\overline{\rho})_{0}(a)
= Sum\{ t(I) \mid (I,J) \in B \} - Sum\{ t(J) \mid (I,J) \in B \} .
\]
Then we must sum over $I$, $i^{L}(I)=i(I_{0})=e_{i}$.
Divide $B$ into two subsets: those with $i(J_{0}) = i(I_{0})=e_{i}$
and $B_{0}$, the remaining ones.
The first group contributes nothing, since for each pair
$(I,J)$, both $I$ and $J$ appear when summing over
$i^{L}(I) = e_{i}$ and their contribution to the sum
is exactly opposite and cancel. We are left to compute
\[
Sum\{ t(I) \mid (I,J) \in B_{0} \} - Sum\{ t(J) \mid (I,J) \in B_{0} \}.
\]
It follows from Lemma \ref{3-9}
and the fact that we assume $e_{i}$ is in $E_{a}$ that $B_{0}$ is empty unless
$I = e_{i,1}e_{a,1}^{L-1}$ or $I = e_{i,j(i)}e_{a,1}^{L-1}$. In each
case, taking $t(I) -t(J)$ and
summing over $B_{0}$, we obtain $e_{a,1}^{L} - e_{b,j(b)}^{L}$.
Now summing over the two values of $I$, we get
$2( e_{a,1}^{L} - e_{b,j(b)}^{L} )$. We have verified the conclusion
for $e_{i}$ in $E_{a}$. The case that $e_{i}$ is
in $E_{b}$ is done in a similar way.

In the case that $e_{i}$ is in $E_{0}$, there are again
two $I$'s consider, but they are $I = e_{i,1}e_{a,1}^{L-1}$
and $I = e_{i,j(i)}e_{b,j(b)}^{L-1}$ (or reversing the first
entries). The terms $t(I) - t(J)$ are then opposite
for these two $I$'s and the total contribution is zero.
That is, we have shown the conclusion holds for
$e_{i}$ in $E_{0}$.
\end{proof}

We finally remark that the proof of Theorem \ref{3-1}
follows easily from Theorem \ref{3-7}, Lemmas \ref{3-8}
and \ref{3-10}. The proof of Theorem \ref{3-2}
follows easily from Theorem \ref{3-7}, Lemmas \ref{3-8}
and \ref{3-11}.


\begin{thebibliography}{99}

\bibitem{AH}
N. Aoki and K. Hiraide, \emph{Topological Theory of Dynamical Systems:  Recent Advances}, North-Holland, Amsterdam-London-New York-Tokyo, 1994.


\bibitem{B1}
R. Bowen, Markov partitions for Axiom A diffeomorphisms, \emph{ Amer.  J. Math}. 92 (1970), 725-747.

\bibitem{B2}
R. Bowen, \emph{On Axiom A diffeomorphisms}, AMS-CBMS Reg. Conf. 135, Providence,  1978.

\bibitem{F2}
T. Fisher, Resolving extensions of finitely
presented systems, \emph{Acta. Appl.
 Math.}, to appear.


\bibitem{F1}
D. Fried,  Finitely presented dynamical systems, \emph{Ergod. Th.} \&  \emph{Dynam.  Sys}. 7 (1987), 489- 507.

\bibitem{K1}
W. Krieger,  On dimension functions and topological Markov chains, \emph{Invent.  Math.} 56  (1980), 239-250.

\bibitem{L1}
D. Lind and B. Marcus,  \emph{An Introduction to Symbolic Dynamics and Coding}, Cambridge Univ. Press, Cambridge, 1995.

\bibitem{P1}
I. F. Putnam, A homology theory for Smale spaces,
 \emph{Mem. A.M.S.}, to appear.



\bibitem{D1}
 D. Ruelle, \emph{ Thermodynamic Formalism}, Encyclopedia of Math. and  its Appl. 5, Addison-Wesley, Reading, 1978.


\bibitem{S1}
S. Smale,  Differentiable dynamical systems,\emph{ Bull. Amer. Math. Soc.}  73 (1967),
747-817.

\bibitem{T1}
K. Thomsen, The homoclinic and heteroclinic $C^*$-algebra of a generalized one-dimensional solenoid, \emph{Ergod. Th.} \& \emph{Dynam. Sys.}, 30 (2010),  263-308.

\bibitem{T2}
K. Thomsen, \emph{$C^*$-algebras of homoclinic and heteroclinic structure in expansive dynamics},  Mem. A.M.S. 206 (2010).

\bibitem{W4}
S. Wieler, Smale spaces via inverse limits,
\emph{Ergod. Th.} \&  \emph{Dynam.  Sys.}, to appear.

\bibitem{W1}
 R.F. Williams, One-dimensional non-wandering sets, \emph{Topology} 6 (1967), 37-487.

\bibitem{W2}
R.F. Williams, Classification of 1-dimensional attractors, \emph{Proc. Symp.  Pymp. Pure Math.} 14 (1970), 341-361.

\bibitem{W3}
R.F. Williams,  Expanding attractors, \emph{IHES Publ. Math.} 43 (1974), 169-203.




\bibitem{Y}
I. Yi,  Canonical symbolic dynamics for one-dimensional generalized solenoids, \emph{ Trans. A.M.S.} 353 (2001), 3741–3767.


\end{thebibliography}
\end{document}